\newcommand{\real}{\mathbb{R}}
\newcommand{\Complex}{\mathbb{C}}
\theoremstyle{plain}
\newtheorem{theorem}{Theorem}[section]
\newtheorem{lem}{Lemma}[section]
\newtheorem{cor}{Corollary}[section]
\newtheorem{prop}{Proposition}[section]
\theoremstyle{definition}
\newtheorem{problem}{Problem}
\theoremstyle{remark}
\title{\bf On the Stability of \\
Independence Polynomials}
\author{Jason Brown\thanks{Supported by NSERC grant.}~ and Ben Cameron\\
Department of Mathematics \& Statistics\\
Dalhousie University\, Halifax, Canada}
\date{}
\begin{document}

\maketitle


\begin{abstract}
 The independence polynomial of a graph is the generating polynomial for the number of independent sets of each size, and its roots are called {\em independence roots}. We investigate the stability of such polynomials, that is, conditions under which the roots lie in the left half-plane (all of the real roots of independence polynomial are negative and hence lie in this half-plane). We show stability for all independence polynomials of graphs with independence number at most three, but for larger independence number we show that the independence polynomials can have roots arbitrarily far to the right. We provide families of graphs whose independence polynomials are stable and ones that are not, utilizing various graph operations. 

  \bigskip\noindent \textbf{Keywords:} graph; independent set; independence polynomial; stable polynomial; root
\end{abstract}

\section{Introduction}\label{Secintro}

A subset of vertices of a (finite, undirected and simple) graph $G$ is called {\em independent} if it induces a subgraph with no edges; the \emph{independence number} of  $G$ is the size of the largest independent set in $G$ and is denoted by $\alpha(G)$ (or just $\alpha$ if the graph is clear from context). The {\em independence polynomial} of $G$, denoted by $i(G,x)$, is defined by 
\[ i(G,x)=\sum_{k=0}^{\alpha}i_kx^k,\] 
where $i_k$ is the number of independent sets of size $k$ in $G$. We call the roots of $i(G,x)$ the {\em independence roots} of $G$. 

Research on the independence polynomial and in particular, the independence roots, has been very active (see, for example, \cite{brownwc,BrownHickman2002,INDROOTS,Brown2005,Csivari2013,Levit2008} and \cite{INDPOLY} for an excellent survey) since it was first defined by Gutman and Harary in 1983 \cite{INDFIRST} (including recent connections, in the multivariate case, to the hard core model in statistical physics \cite{ScottSokal2005}). On the nature of these roots, Chudnovsky and Seymour \cite{Chudnovsky2007} showed that the independence roots of claw-free graphs are all real, and Brown and Nowakowski \cite{Brown2005} showed that with probability tending to $1$, a graph will have a nonreal independence root.  

Asking when the independence roots are all real is a very natural question, but what about their location in the complex plane? While Brown et al. \cite{INDROOTS} showed that the collection of the independence roots of all graphs are in fact dense in the complex plane, plots of the independence roots of small graphs show a different story (see Figures~\ref{smallgraphsplot} and \ref{smalltreesplot}).  One striking thing about these plots is that not a single root lies in the open right half-plane (RHP) $\{z\in\Complex:\text{Re}(z) > 0\}$, so we are left to wonder: how ubiquitous are graphs with {\em stable} independence polynomials, that is, with all their independence roots in the left half-plane (LHP) $\{z\in\Complex:\text{Re}(z)\le0\}$? (A polynomial with all of its roots in the LHP is called {\em Hurwitz quasi-stable}, or simply {\em stable}, and such polynomials are important in many applied settings \cite{Choe2004}). Such a region is a natural extension of the negative real axis, which plays such a  dominant role in the Chudnovsky-Seymour result on claw-free graphs.

We shall call a graph itself {\em stable} if its independence polynomial is stable. It is known that the independence root of smallest modulus is always real and therefore negative (see \cite{brownwc}), so no independence polynomial has all its roots in the RHP, but it is certainly possible for it to have all roots in the LHP. This paper shall consider the stability of independence polynomials, providing some families of graphs whose independence polynomials are indeed stable, while showing that graphs formed under various constructions have independence polynomials that are not only nonstable but have roots with arbitrarily large real part. 

\begin{figure}[ht]
\begin{center}
\includegraphics[scale=0.5]{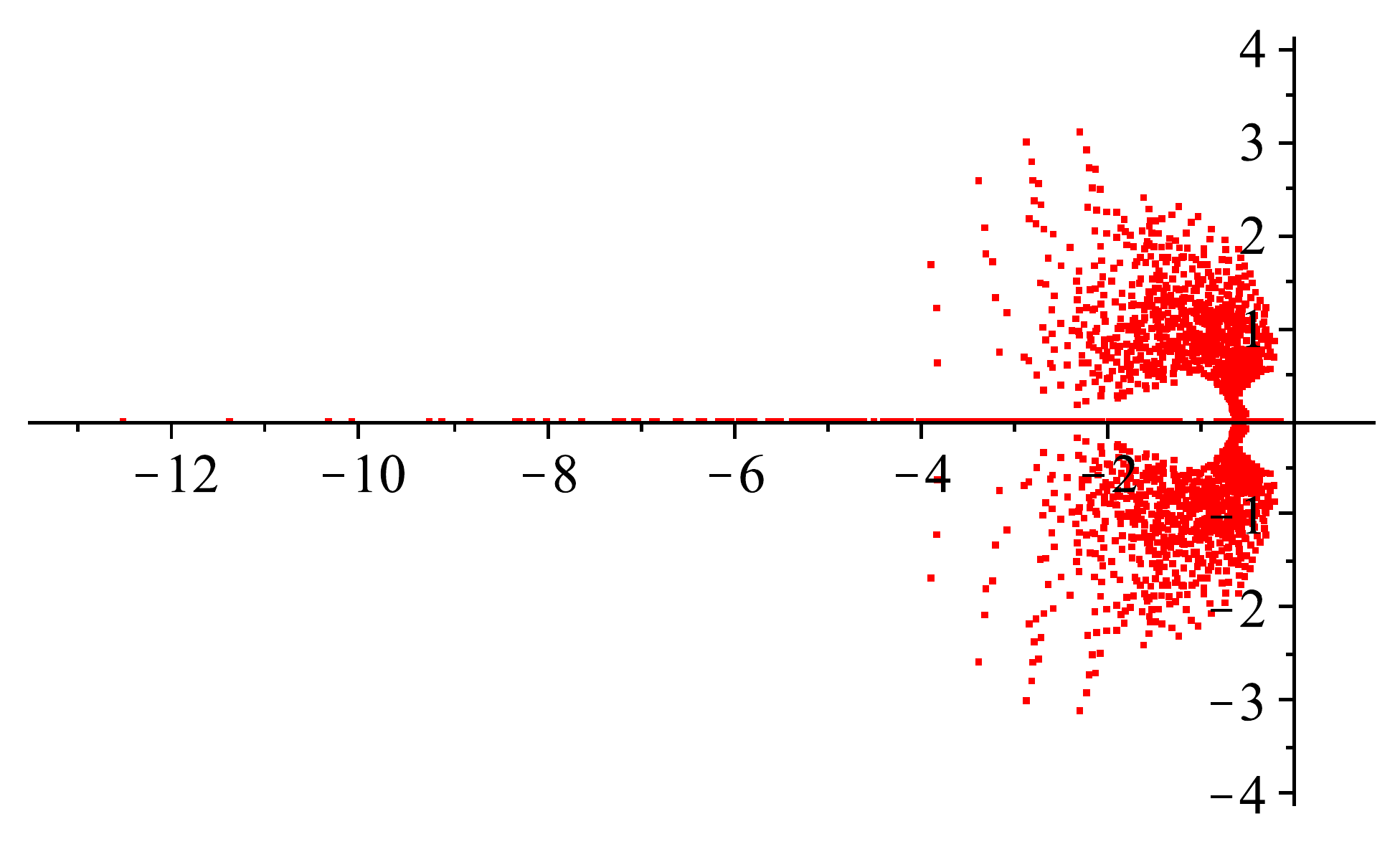}
\end{center}
\caption{Independence roots of all graphs on $9$ or fewer.}\label{smallgraphsplot}
\end{figure}

\begin{figure}[ht]
\begin{center}
\includegraphics[scale=.5]{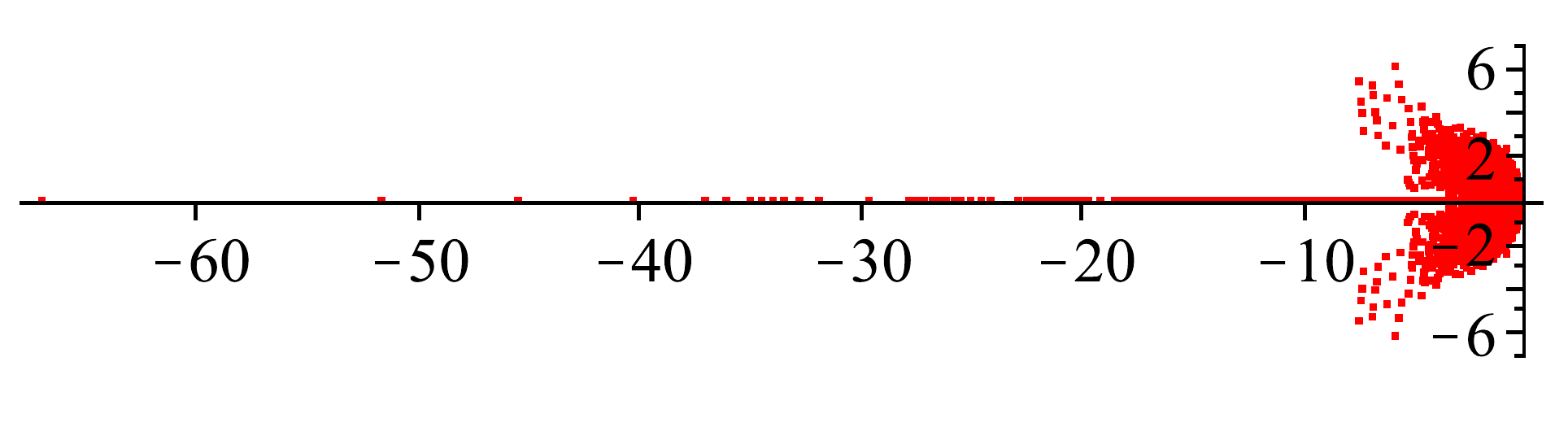}
\end{center}
\caption{Independence roots of all trees on $14$ or fewer vertices.}\label{smalltreesplot}
\end{figure} 

\vspace{0.2in}
We shall first consider stability for graphs with small independence number, and show that while all graphs with independence number at most $3$ are stable, it is not the case for larger independence number. Then we shall turn to producing stable graphs as well as nonstable graphs. Graph operations will play roles in both. We conclude with a few open questions.

\section{Stability for Small Independence Number}\label{secSmallIndNum}

We begin by proving that all graphs with independence number at most three are indeed stable. To do so, we shall utilize a necessary and sufficient condition, due to Hermite and Biehler, for a real polynomial to be stable. Prior to introducing the theorem, we shall need some notation.

A polynomial is {\it standard} when either it is identically zero or its leading coefficient is positive. Given a polynomial $P(x) = \displaystyle{\sum_{i=0}^{d}a_{i}x^{i}}$, let 
\[ P^{even}(x) = \sum_{i=0}^{\lfloor d/2 \rfloor} a_{2i} x^{i},\]
and
\[ P^{odd}(x) = \sum_{i=0}^{\lfloor (d-1)/2 \rfloor} a_{2i+1} x^{i};\]
$P^{even}(x)$ and $P^{odd}(x)$ are the ``even'' and ``odd'' parts of the polynomial, with
\[ P(x) = P^{even}(x^2) + xP^{odd}(x^2).\]
For example, if $P(x)=i(K_{3,3},x)=1+6x+6x^2+2x^3$, then $P^{even}(x)=1+6x$ and $P^{odd}(x)=6+2x$. 

Finally, let $f(x)$ and $g(x)$ be two real polynomials with all real roots, with say $s_1\le s_2\le\ldots\le s_n$ and $t_1\le t_2\le\ldots\le t_m$ being their respective roots. We say that 
\begin{itemize}
\item $f$ \textit{interlaces} $g$ if $m=n+1$ and $t_1\le s_1\le t_2\le s_2\le \cdots\le s_n\le t_{n+1}$, and 
\item $f$ \textit{alternates left} of $g$ if $m=n$ and $ s_1\le t_1\le s_2\le t_2\le \cdots\le s_n\le t_{n}$. 
\end{itemize}
We write $f\prec g$ for either $f$ interlaces $g$ or $f$ alternates left of $g$. A key result that we shall rely upon is the Hermite-Biehler Theorem which characterizes when a real polynomial is stable (see, for example, \cite{Wagner2000}).

\begin{theorem}[\bfseries Hermite-Biehler]\label{thmhb}
Let $P(x)=P^{even}(x^2)+xP^{odd}(x^2)$ be standard. Then $P(x)$ is stable if and only if both $P^{even}$ and $P^{odd}$ are standard, have only nonpositive roots and $P^{odd} \prec P^{even}$. \hfill $\Box$
\end{theorem}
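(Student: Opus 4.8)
The plan is to prove the Hermite--Biehler criterion by reading off the even and odd parts of $P$ as the real and imaginary parts of $P$ restricted to the imaginary axis. Substituting $x=i\omega$ with $\omega\in\real$ and using $(i\omega)^2=-\omega^2$ gives
\[ P(i\omega)=P^{even}(-\omega^2)+i\,\omega\,P^{odd}(-\omega^2), \]
so, because $P^{even}$ and $P^{odd}$ have real coefficients, $A(\omega):=\operatorname{Re}P(i\omega)=P^{even}(-\omega^2)$ and $B(\omega):=\operatorname{Im}P(i\omega)=\omega\,P^{odd}(-\omega^2)$. Every hypothesis of the theorem now becomes a statement about the real curve $\omega\mapsto P(i\omega)$: via the change of variable $y=-\omega^2\le0$, ``$P^{even}$ has only nonpositive roots'' says exactly that $A$ has only real roots, and similarly for $P^{odd}$ and $B$; the relation $P^{odd}\prec P^{even}$ will correspond to an interlacing of the roots of $A$ and $B$ on the $\omega$-axis.

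The analytic heart is the classical argument-principle bridge, which I would prove first for \emph{strict} stability (all roots in the open LHP). If $P=a_d\prod_j(x-z_j)$ with every $\operatorname{Re}(z_j)<0$, then as $\omega$ runs from $-\infty$ to $+\infty$ each factor $i\omega-z_j$ turns its argument strictly monotonically through exactly $\pi$, while the positive leading coefficient (``standard'') fixes the orientation; hence $\arg P(i\omega)$ increases strictly monotonically by $d\pi$. Conversely, the argument principle applied to a large semicircular contour in the right half-plane shows that the number of roots of $P$ in the open RHP is $\tfrac12\big(d-\tfrac1\pi\Delta\big)$, where $\Delta$ is the net increase of $\arg P(i\omega)$; so all roots lie in the open LHP precisely when $\Delta=d\pi$. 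Monotone winding by $d\pi$ is in turn equivalent, by the elementary geometry of a curve that crosses the coordinate axes alternately, to the statement that $A$ and $B$ have real, simple, strictly interlacing zeros together with the sign condition $A(\omega)B'(\omega)-A'(\omega)B(\omega)>0$. This yields strict Hermite--Biehler: $P$ has all roots in the open LHP iff $A,B$ strictly interlace with positive Wronskian.

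Next I would translate the $\omega$-axis interlacing of $A$ and $B$ into the claimed relation $P^{odd}\prec P^{even}$ through $y=-\omega^2$, which maps $(0,\infty)$ decreasingly and bijectively onto $(-\infty,0)$. Here one must track multiplicities carefully: each positive root $\omega_0$ of $A$ or $B$ is paired with its mirror $-\omega_0$ and corresponds to a single root $y=-\omega_0^2$ of $P^{even}$ or $P^{odd}$, while $B$ always vanishes at $\omega=0$ because of its odd symmetry. The parity of $d$ controls the degrees: when $d$ is even, $\deg P^{even}=\deg P^{odd}+1$ and $\prec$ reads as ``interlaces,'' whereas when $d$ is odd the two parts have equal degree and $\prec$ reads as ``alternates left''; in both cases the symmetric, root-at-the-origin interlacing of $A$ and $B$ unfolds to exactly one of these two patterns, and the positivity of the leading coefficients of $P^{even}$ and $P^{odd}$ is what forces the Wronskian to be positive rather than negative (equivalently, it keeps the phase increasing). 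The positive-coefficient hypotheses ``$P^{even},P^{odd}$ standard'' are the image of the ``standard'' orientation under this unfolding.

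Finally, since the paper's notion of stability allows roots on the imaginary axis (the \emph{closed} LHP), I would pass from the strict statement to the stated one by continuity. Shifting all roots left, $P_\varepsilon(x)=P(x+\varepsilon)$ is strictly stable whenever $P$ is quasi-stable, its coefficients and hence its even and odd parts converge to those of $P$ as $\varepsilon\to0^+$, and both ``strict interlacing $\to\ \prec$'' and ``open-LHP roots $\to$ nonpositive roots'' are closed conditions; the reverse implication follows symmetrically by perturbing $A,B$ to a strictly interlacing pair, lifting to $P_\varepsilon(x)=A_\varepsilon(x^2)+xB_\varepsilon(x^2)$, and using that quasi-stability is closed under coefficient limits (the leading coefficient staying positive prevents a drop in degree). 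I expect the main obstacle to be exactly the combinatorial bookkeeping across the two-to-one map $y=-\omega^2$: matching the doubled $\omega$-roots, the forced zero of $B$ at the origin, and any imaginary-axis roots of $P$ to the zeros of $P^{even}$ and $P^{odd}$, and checking that the ``interlaces versus alternates left'' dichotomy and the nonstrict inequalities survive both the change of variable and the limit. The argument-principle core is standard; it is this accounting of roots, together with the sign and orientation conventions, where care is essential.
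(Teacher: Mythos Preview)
The paper does not prove this theorem at all: it is quoted as a classical result (with the $\Box$ at the end of the statement and the reference to Wagner \cite{Wagner2000}), and is used as a black box throughout. So there is no ``paper's own proof'' to compare against.

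Your outline is the standard route to Hermite--Biehler: read $P^{even}$ and $P^{odd}$ off the real and imaginary parts of $P(i\omega)$, use the argument principle on a right half-plane semicircle to equate strict stability with a monotone $d\pi$ increase of $\arg P(i\omega)$, convert that to strict interlacing of $A$ and $B$ with positive Wronskian, and then push everything through the two-to-one change of variable $y=-\omega^2$. The parity bookkeeping you describe (even $d$ gives $\deg P^{even}=\deg P^{odd}+1$ and ``interlaces''; odd $d$ gives equal degrees and ``alternates left'') matches the paper's definitions of $\prec$, and the perturbation $P(x+\varepsilon)$ to pass from strict stability to the closed-LHP version is the usual closure argument. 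The places you flag as delicate --- tracking the forced zero of $B$ at $\omega=0$, the doubling under $\omega\mapsto-\omega$, handling imaginary-axis roots of $P$ in the quasi-stable limit, and making sure the ``standard'' sign conventions pin down the direction of winding --- are exactly where a full write-up needs care, but nothing in your plan is wrong. If you want a reference that carries out precisely this argument, Rahman--Schmeisser's \emph{Analytic Theory of Polynomials} or Wagner's paper cited here will do.
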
 

We are now in a position to prove:

\begin{prop}
If $G$ is a graph of order $n$ with $\alpha(G)\le 3$, then $i(G,x)$ is stable.
\end{prop}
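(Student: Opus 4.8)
The plan is to write $i(G,x)=1+i_1x+i_2x^2+i_3x^3$ with $i_1=n$ and to apply the Hermite--Biehler theorem (Theorem~\ref{thmhb}), treating $\alpha(G)\le 2$ and $\alpha(G)=3$ separately. The case $\alpha(G)\le 2$ is immediate: either $n=0$ and $i(G,x)=1$, or $i(G,x)$ has degree $1$ or $2$ with all its coefficients positive, and any such polynomial has its roots among the negative reals and the complex-conjugate pairs with negative real part $-\tfrac{i_1}{2i_2}$, hence is stable. So the substantive case is $\alpha(G)=3$.

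Assume $\alpha(G)=3$. Then $i_3\ge 1$, and a single independent triple already forces $i_1=n\ge 3$ and $i_2\ge 3$, so all four coefficients of $i(G,x)=i_3x^3+i_2x^2+i_1x+1$ are positive. Its even and odd parts are $P^{even}(x)=i_2x+1$ and $P^{odd}(x)=i_3x+i_1$, each of which is standard (it has positive leading coefficient) and has a single negative root, namely $-\tfrac{1}{i_2}$ and $-\tfrac{i_1}{i_3}$. Since $P^{even}$ and $P^{odd}$ both have degree $1$, the condition $P^{odd}\prec P^{even}$ of Theorem~\ref{thmhb} just says ``$P^{odd}$ alternates left of $P^{even}$,'' i.e.\ that the root of $P^{odd}$ is no larger than that of $P^{even}$:
\[ -\frac{i_1}{i_3}\ \le\ -\frac{1}{i_2} \qquad\Longleftrightarrow\qquad i_1 i_2\ \ge\ i_3 .\]
By Theorem~\ref{thmhb}, this reduces the proposition (in the case $\alpha(G)=3$) to the single inequality $i_1 i_2\ge i_3$.

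The crux is therefore this inequality, and I would prove it by double counting. Count the pairs $(\{u,v\},w)$ in which $\{u,v\}$ is an independent $2$-set and $w\notin\{u,v\}$ is a vertex with $\{u,v,w\}$ independent: each independent triple gives rise to exactly three such pairs (one for each of its $2$-subsets), so there are $3i_3$ of them; but there are $i_2$ choices for $\{u,v\}$ and at most $n-2$ choices for $w$, whence $3i_3\le(n-2)i_2<n\,i_2=i_1 i_2$. So the inequality holds, and with considerable room to spare, meaning the degree-$3$ case is comfortably rather than marginally stable. I do not expect a real obstacle; the only point needing care is the bookkeeping in the Hermite--Biehler step, namely that $P^{even}$ and $P^{odd}$ are genuinely of degree $1$ and standard, which is exactly what the positivity of all coefficients, guaranteed by $\alpha(G)=3$, delivers.
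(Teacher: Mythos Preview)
Your proof is correct and follows essentially the same route as the paper: Hermite--Biehler for $\alpha(G)=3$, reducing stability to the inequality $i_1 i_2\ge i_3$, which you (like the paper) verify by the counting observation that every independent triple contains independent pairs. Your handling of $\alpha(G)=2$ is in fact a little simpler than the paper's---the paper invokes Tur\'an's theorem to force the discriminant nonnegative (so the roots are real), whereas you observe directly that a quadratic with all positive coefficients is stable whether its roots are real or not.
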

\begin{proof}
For graphs with independence number $1$ (that is, a complete graph), the independence polynomial is of the form $1+nx$. These polynomials are obviously stable for all $n$.
For graphs with independence number $2$, the independence polynomial has the form $1+nx+i_2x^2$. The complement of a graph with independence number $2$ is triangle-free, and hence by Turan's famous theorem, has at most $\lfloor \frac{n}{2} \rfloor \lceil \frac{n}{2} \rceil \leq \frac{n^2}{4}$ many edges. However, clearly the number of edges in the complement is precisely $i_{2}$, so that $i_{2} \leq \frac{n^2}{4}$, which implies that the discriminant of  the independence polynomial $1+nx+i_2x^2$ is nonnegative, and the roots are real (and hence negative). Therefore, the independence polynomial of a graph with independence number $2$ is necessarily stable.

For graphs with independence number $3$, it is again the case that all independence polynomials are stable. To show this, we utilize the Hermite-Biehler Theorem. If $\alpha(G)=~3$, then $$i(G,x)=1+nx+i_2x^2+i_3x^3=P^{even}(x^2)+xP^{odd}(x^2)$$ where $P^{even}=1+i_2x$ and $P^{odd}=n+i_3x$. It is clear that $P^{even}$ and $P^{odd}$ each have only one real root, but we must show that $P^{odd}\prec P^{even}$, i.e. that $\tfrac{-n}{i_3}\le \tfrac{-1}{i_2}$. Equivalently, we need to show that  $ni_2\ge i_3$, but this follows as every independent set of size $3$ contains an independent set of size $2$, so adjoining an outside vertex to each independent set of size $2$ will certainly cover all independent sets of size $3$ at least once.
Thus by Theorem \ref{thmhb}, $i(G,x)$ is stable for all $\alpha(G)=3$.
\end{proof}

We now turn to independence number at least $4$, and show, in contrast, that there are many graphs whose independence roots lie in the RHP -- in fact, we can find roots in the RHP with arbitrarily large real part. We begin with a lemma. This lemma will be pivotal for many of the results in the remainder of this section as well as in Section \ref{secRHPgraphs}.

\begin{lem}\label{lemarblargeRHProots}
Let $R > 0 $ and $f(x)\in\real [x]$ be a polynomial of degree $d$ with positive coefficients. Then 
\begin{enumerate}
\item if $d \ge 4$, then for $m$ sufficiently large $f(x)+mx$ has a root with real part greater than $R$, and
\item if $d \ge 3$, then for $\ell$ sufficiently large $f(x)+\ell$ has a root with real part greater than $R$.
\end{enumerate}
\end{lem}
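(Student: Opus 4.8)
The plan is to treat both parts by a single device: rescale the variable so that the leading term $a_dx^d$ of $f$ balances the large perturbation — the term $mx$ in part (1), the constant $\ell$ in part (2) — pass to the limit, and exhibit a root of the resulting limit polynomial that lies strictly in the open right half-plane. Continuity of the roots of a polynomial as functions of its coefficients then transfers this root to $f(x)+mx$ (resp.\ $f(x)+\ell$) for all large $m$ (resp.\ $\ell$), and the rescaling inflates its real part past $R$.

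For part (1), write $f(x)=\sum_{j=0}^{d}a_jx^j$ with each $a_j>0$, set $g_m(x)=f(x)+mx$, and substitute $x=m^{1/(d-1)}y$, i.e.\ consider $p_m(y)=m^{-d/(d-1)}\,g_m\!\big(m^{1/(d-1)}y\big)$. In $p_m$ the coefficient of $y^d$ is $a_d$, the coefficient of $y$ equals $1+a_1m^{-1}$ (the term $mx$ contributes exactly $1$ after rescaling), and every other coefficient is a negative power of $m$ times a constant; hence $p_m\to h$ uniformly on compact subsets of $\Complex$, where $h(y)=a_dy^d+y=y\,(a_dy^{d-1}+1)$. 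Because $d-1\ge 3$, the number $y_0=a_d^{-1/(d-1)}e^{i\pi/(d-1)}$ is a simple root of $h$ with $\text{Re}(y_0)=a_d^{-1/(d-1)}\cos\!\big(\tfrac{\pi}{d-1}\big)>0$, since $0<\tfrac{\pi}{d-1}\le\tfrac{\pi}{3}<\tfrac{\pi}{2}$. Every $p_m$ has degree $d$ with leading coefficient $a_d$, so by continuity of roots (Rouch\'e's theorem on a small circle about $y_0$) there is, for all large $m$, a root $y_m$ of $p_m$ with $|y_m-y_0|<\tfrac12\text{Re}(y_0)$. Then $x_m=m^{1/(d-1)}y_m$ is a root of $g_m$, and $\text{Re}(x_m)=m^{1/(d-1)}\text{Re}(y_m)>\tfrac12\,m^{1/(d-1)}\text{Re}(y_0)\to\infty$, so $\text{Re}(x_m)>R$ once $m$ is large enough.

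Part (2) runs identically with exponent $1/d$: set $g_\ell(x)=f(x)+\ell$ and $p_\ell(y)=\ell^{-1}g_\ell\!\big(\ell^{1/d}y\big)$. Here the coefficient of $y^d$ in $p_\ell$ is $a_d$, the constant term is $1+a_0\ell^{-1}$, and the coefficient of $y^j$ for $1\le j\le d-1$ is a negative power of $\ell$ times a constant; so $p_\ell\to h$ with $h(y)=a_dy^d+1$. Since $d\ge 3$, the root $y_0=a_d^{-1/d}e^{i\pi/d}$ has $\text{Re}(y_0)=a_d^{-1/d}\cos\!\big(\tfrac{\pi}{d}\big)>0$, and the continuity-of-roots argument again produces roots $y_\ell$ of $p_\ell$ with $\text{Re}(y_\ell)>\tfrac12\text{Re}(y_0)$ for all large $\ell$. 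Then $x_\ell=\ell^{1/d}y_\ell$ is a root of $g_\ell$ with $\text{Re}(x_\ell)=\ell^{1/d}\text{Re}(y_\ell)\to\infty$, so $\text{Re}(x_\ell)>R$ for $\ell$ large.

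The only subtle step is the appeal to continuity of roots. One must note that every $p_m$ (resp.\ $p_\ell$) has the same degree $d$ and the same nonzero leading coefficient $a_d$, so that no root escapes to, or appears from, infinity; together with $y_0$ being a simple root of the limit polynomial, Rouch\'e's theorem then guarantees that for large parameter exactly one root of $p_m$ (resp.\ $p_\ell$) lies in a prescribed small disk about $y_0$, and in particular that it converges to $y_0$. The remainder is bookkeeping with powers of $m$ and $\ell$. It is also instructive to see where the degree hypotheses are used: they are exactly what places the pertinent root of unity, $e^{i\pi/(d-1)}$ for part (1) and $e^{i\pi/d}$ for part (2), in the open right half-plane. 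For $d=3$ in part (1) the limit polynomial is $a_3y^3+y$, whose nonzero roots $\pm i\,a_3^{-1/2}$ are purely imaginary; for $d=2$ in part (2) the limit polynomial $a_2y^2+1$ likewise has purely imaginary roots. So in these borderline cases the rescaling argument produces no right-half-plane root, consistent with the hypotheses as stated.
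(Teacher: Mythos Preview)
Your proof is correct and takes a genuinely different route from the paper. The paper proceeds via the Hermite--Biehler Theorem: it shifts by $R$, considers $P_m(x)=m(x+R)+f(x+R)$, and argues (by counting critical points) that for large $m$ the even part $P_m^{even}$ cannot have all real roots, whence $P_m$ is unstable and a root with real part exceeding $R$ follows; the case $d=3$ in part~(2) then requires a separate Hermite--Biehler computation. Your argument instead rescales by a power of the large parameter so that the perturbed polynomial converges to the elementary polynomial $a_dy^d+y$ (resp.\ $a_dy^d+1$), reads off a simple root in the open right half-plane for $d\ge4$ (resp.\ $d\ge3$), and transports it back via Rouch\'e. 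This is more self-contained (no Hermite--Biehler), handles both parts uniformly, and yields the stronger conclusion that the real part of the relevant root tends to $+\infty$ with the parameter rather than merely exceeding the fixed $R$; the paper's approach, on the other hand, ties the lemma to the stability framework used elsewhere in the article. Your closing remark identifying exactly where the degree hypotheses enter (the argument $\pi/(d-1)$, resp.\ $\pi/d$, must be strictly less than $\pi/2$) is a nice sharpening that the Hermite--Biehler route does not make transparent.
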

\begin{proof}
We consider the polynomial $g(x)=f(x+R)$. As per the Hermite-Biehler theorem let $g^{even}(x)$ and $g^{odd}(x)$ denote the even and odd part of $g(x)$, respectively, so that \[ g(x)=g^{even}(x^2)+xg^{odd}(x^2).\] 
For the proof of part 1, consider the polynomial $P_{m}(x)=m(x+R)+g(x)$. 
Clearly 
\[ P^{even}_{m}(x)=mR+g^{even}(x)\] 
and 
\[ P_m^{odd}(x)=m+g^{odd}(x).\] 
Suppose first that $d$ is even. As $d \geq 4$, clearly $\deg(g^{even}(x))\ge 2$. 
The leading coefficient of $P^{even}_{m}(x)$ is positive (as $f$ has all positive coefficients and $R>0$), so it follows that $\lim_{x\to\infty}g^{even}(x)=\infty$. 
Let 
\[ M=\max\{|g^{even}(z)|:(g^{even})'(z)=0\},\] 
that is, $M$ is the maximum absolute value of the function $g^{even}_m(x)$ at the latter's critical points (which are the same as the critical points of $P^{even}_{m}(x)$, as the two functions differ by a constant). 
For any $m \geq \lfloor \tfrac{M}{R} \rfloor + 1$, the points on the graph of $P^{even}_m(x)$ whose horizontal values are critical points of $P^{even}_m(x)$ all lie above the horzontal axis. It follows that the roots of $P^{even}_m(x) = mR+g^{even}(x)$ are simple (that is, have multiplicity $1$), as if a root $r$ of $P^{even}_m(x)$ had multiplicity larger than $1$, then it would also be a critical point of $P^{even}_m(x)$, but for the chosen value of $m$, $P^{even}_m(r)>0$. Moreover, $P^{even}_m(x)$ has at most one real root, as if it had two roots $a < b$, then by the simpleness of the roots, either the function $P^{even}_m(x)$ is negative at some point between $a$ and $b$, or to the right of $b$, but in either case $P^{even}_m(x)$ would have a critical point $c$ at which $P^{even}_m(c) < 0$, a contradiction. 
In any event, as $P^{even}_m(x)$ has at most one real root (counting multiplicities) and $\deg(P^{even}_m(x)) \geq 2$, $P^{even}_{m}(x)$ must have a nonreal root. 
By the Hermite-Biehler theorem, it follows that $P_{m}(x)=m(x+R)+f(x+R)$ has a root in the RHP. Note that $x=a+ib$ is a root of $P_{m}(x)$ if and only if $x+R=(a+R)+ib$ is a root of $f(x)+mx$. Since there exists a root $x$ with $\text{Re}(x)\ge 0$ of $P_{m}(x)$, $x+R$ is a root of $f(x)+mx$ with $\text{Re}(x+R)\ge R$.  Therefore, for sufficiently large $m$, $f(x)+mx$ has roots with real part greater than $R$.

A similar (but slightly simpler) argument holds for part 2, provided $d \geq 4$, so all that remains is the case $d = 3$.
In this case, let $f(x)=a_0+a_1x+a_2x^2+a_3x^3$. Set 
\begin{eqnarray*}
g(x) & = & f(x+R)\\
 & =& a_0+a_1R+a_2R^2+a_3R^3+(a_1+2Ra_2+3R^2a_3)x+(3Ra_3+a_2)x^2+a_3x^3.
 \end{eqnarray*}
Now let $P_{\ell}=\ell+g(x)$. By Theorem \ref{thmhb}, $P_{\ell}$ is stable if and only if
$$-\frac{a_1+2Ra_2+3R^2a_3}{a_3} \leq -\frac{a_0+a_1R+a_2R^2+a_3R^3+\ell}{3Ra_3+a_2},$$
that is, if and only if 
$$\frac{a_1+2Ra_2+3R^2a_3}{a_3} \geq \frac{a_0+a_1R+a_2R^2+a_3R^3+\ell}{3Ra_3+a_2},$$
but clearly this fails if $\ell$ is large enough. 
Therefore, for $\ell$ sufficiently large, $P_{\ell}^{odd}\not\prec P_{\ell}^{even}$ and therefore, $f(x)$ has a root with real part greater than $R$.
\end{proof}

We shall shortly show the there are nonstable graphs of every independence number greater than $3$ by combining the previous lemma with another tool from complex analysis, the well known and useful {\em Gauss--Lucas} Theorem, which states that the convex hull of roots of polynomials only shrink when taking derivatives.

\begin{theorem}[\bfseries Gauss--Lucas]\label{thmgl}
Let $f(z)$ be a nonconstant polynomial with complex coefficients, and
let $f'(z)$ be the derivative of $f(z)$.  Then 
the roots of $f'(z)$ lie in the convex hull of the set of roots of $f(z)$. \hfill $\Box$
\end{theorem}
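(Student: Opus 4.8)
The plan is to work with the logarithmic derivative $f'/f$. First I would factor $f(z)=c\prod_{k=1}^{n}(z-a_{k})$ over $\Complex$, where $c\ne 0$ and $a_{1},\dots,a_{n}$ are the roots of $f$ listed with multiplicity (possible since $f$ is nonconstant with complex coefficients); if $n=1$ then $f'$ is a nonzero constant and there is nothing to prove, so I may assume $n\ge 2$. Logarithmic differentiation then gives, at every $z$ that is not a root of $f$,
\[ \frac{f'(z)}{f(z)}=\sum_{k=1}^{n}\frac{1}{z-a_{k}}. \]

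Next I would fix an arbitrary root $w$ of $f'$ and split into two cases. If $f(w)=0$, then $w$ equals some $a_{k}$, hence trivially lies in the convex hull of $\{a_{1},\dots,a_{n}\}$. If $f(w)\ne0$, the identity above applies at $z=w$ and yields $\sum_{k=1}^{n}(w-a_{k})^{-1}=0$. Writing each term as $(w-a_{k})^{-1}=(\overline{w}-\overline{a_{k}})/|w-a_{k}|^{2}$ and setting $\lambda_{k}=|w-a_{k}|^{-2}>0$, this becomes $\sum_{k}\lambda_{k}(\overline{w}-\overline{a_{k}})=0$, that is, $\overline{w}=\big(\sum_{k}\lambda_{k}\overline{a_{k}}\big)/\big(\sum_{k}\lambda_{k}\big)$. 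Taking complex conjugates expresses $w=\sum_{k}\mu_{k}a_{k}$ with $\mu_{k}=\lambda_{k}/\sum_{j}\lambda_{j}\ge 0$ and $\sum_{k}\mu_{k}=1$, so $w$ is a convex combination of the roots of $f$, as required.

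I do not expect a genuine obstacle here; this is the classical argument. The only points to watch are that the roots must be counted with multiplicity for the partial-fraction identity to be valid, that the denominators $|w-a_{k}|^{2}$ are nonzero precisely because we are in the case $f(w)\ne0$, and that $\sum_{j}\lambda_{j}>0$ (which holds since $n\ge 1$) so the normalization making the $\mu_{k}$ a probability vector is legitimate. An alternative, conjugate-free phrasing pairs the identity $\sum_{k}(w-a_{k})^{-1}=0$ against a supporting hyperplane of the convex hull of the roots, but the computation above is the most direct route.
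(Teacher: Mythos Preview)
Your argument is correct and is the classical proof of the Gauss--Lucas Theorem via the logarithmic derivative; the case split and the normalization of the weights are handled properly. Note, however, that the paper does not supply its own proof of this theorem: it is stated as a known result (with the $\Box$ marking the end of the statement), so there is no paper proof to compare against---your write-up simply fills in the standard justification.
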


\begin{cor}\label{glcor}
If $f'(z)$ has a root $\gamma'$ with $\text{Re}(\gamma')=R$, then $f(z)$ has a root $\gamma$ such that $\text{Re}(\gamma)\ge R$. \hfill $\Box$
\end{cor}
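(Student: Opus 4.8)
The plan is to derive this immediately from the Gauss--Lucas Theorem (Theorem~\ref{thmgl}) together with the elementary fact that a convex combination of real numbers cannot exceed their maximum. First I would dispose of degeneracies: since $f'(z)$ is assumed to have a root, $f(z)$ must be nonconstant and in fact $\deg f \ge 2$ (if $\deg f = 1$ then $f'$ is a nonzero constant with no roots), so $f$ has roots, say $\gamma_1,\ldots,\gamma_k \in \Complex$ (listed without multiplicity).

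Next I would invoke Theorem~\ref{thmgl}: the root $\gamma'$ of $f'$ lies in the convex hull of $\{\gamma_1,\ldots,\gamma_k\}$, so we may write $\gamma' = \sum_{j=1}^{k}\lambda_j \gamma_j$ with each $\lambda_j \ge 0$ and $\sum_{j=1}^{k}\lambda_j = 1$. Taking real parts (a real-linear operation that commutes with multiplication by the nonnegative scalars $\lambda_j$) gives
\[ R = \text{Re}(\gamma') = \sum_{j=1}^{k}\lambda_j\,\text{Re}(\gamma_j) \le \max_{1 \le j \le k}\text{Re}(\gamma_j). \]
Hence some index $j$ satisfies $\text{Re}(\gamma_j) \ge R$, and setting $\gamma = \gamma_j$ finishes the proof. (Equivalently, one could note that the linear functional $z \mapsto \text{Re}(z)$ attains its maximum over a polytope at a vertex, and the vertices of the convex hull are among the $\gamma_j$; the convex-combination phrasing just makes this self-contained.)

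I do not anticipate any real obstacle here, since the statement is essentially a one-line corollary once Gauss--Lucas is in hand. The only points requiring a moment's care are the degenerate cases noted above and the bookkeeping regarding multiplicities of roots — but as we only need the \emph{existence} of a single root of $f$ with real part at least $R$, it suffices to work with the distinct roots, and multiplicity plays no role.
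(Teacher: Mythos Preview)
Your proof is correct and follows exactly the approach the paper intends: the corollary is stated with a terminal $\Box$ and no proof, indicating it is meant to be immediate from the Gauss--Lucas Theorem, and your argument is precisely the natural elaboration of that immediacy. There is nothing to add or change.
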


We are now able to provide, for each $\alpha \geq 4$, infinitely many examples of graphs with independence number $\alpha$ that are nonstable. Moreover, we can embed {\em any} graph with independence number $\alpha \geq 4$ into another nonstable one with the same independence number, and we can even do so with a (nonreal) independence root as as far to the right as we like. To do this, we use the join operation. The {\em join} of two graphs $G$ and $H$, denoted $G+H$, is the graph obtained by joining all vertices of $G$ with all vertices of $H$.

\begin{prop}\label{prop:joingraphind3}
Let $G=H+\underbrace{F+F+\cdots+F}_\text{k}$, the join of a graph $H$ and $k$ copies of $F$. If $\alpha(H)\ge \alpha(F)+3$, then for $k$ sufficiently large, $i(G,x)$ has roots with arbitrarily large real part.  
\end{prop}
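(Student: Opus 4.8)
The plan is to differentiate $i(G,x)$ enough times to reach a polynomial of the shape handled by Lemma~\ref{lemarblargeRHProots}, produce a root far out in the right half-plane there, and then lift that root back up to $i(G,x)$ via the Gauss--Lucas machinery of Corollary~\ref{glcor}. The starting point is the join identity $i(G_1+G_2,x)=i(G_1,x)+i(G_2,x)-1$ (an independent set of a join lies entirely in one of the two parts), which iterates to
\[ i(G,x)=i(H,x)+k\bigl(i(F,x)-1\bigr). \]
Since $i(F,x)-1=i_1(F)x+i_2(F)x^2+\cdots+i_{\alpha(F)}(F)x^{\alpha(F)}$, this polynomial has degree exactly $\alpha(F)$ (we may assume $F$ has at least one vertex, so $\alpha(F)\ge 1$; otherwise the claim is vacuous), with all positive coefficients and zero constant term.

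Next I would differentiate both sides $\alpha(F)$ times. Differentiation preserves positivity of coefficients, so $\tilde f(x):=i(H,x)^{(\alpha(F))}$ has all positive coefficients and degree $\alpha(H)-\alpha(F)$, which is at least $3$ by the hypothesis $\alpha(H)\ge\alpha(F)+3$; this is the only place that hypothesis enters. Meanwhile $\bigl(i(F,x)-1\bigr)^{(\alpha(F))}$ is the positive constant $c:=\alpha(F)!\,i_{\alpha(F)}(F)$, since every term of $i(F,x)-1$ but the leading one is annihilated by $\alpha(F)$ derivatives. Hence
\[ i(G,x)^{(\alpha(F))}=\tilde f(x)+kc. \]
With $\tilde f$ fixed of degree at least $3$ with positive coefficients and $kc\to\infty$ as $k\to\infty$ (here $c$ depends only on $F$), part~2 of Lemma~\ref{lemarblargeRHProots} applies: for any prescribed $R>0$ there is a threshold on $k$ beyond which $i(G,x)^{(\alpha(F))}=\tilde f(x)+kc$ has a root with real part greater than $R$.

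To conclude, I would apply Corollary~\ref{glcor} a total of $\alpha(F)$ times, stepping from $i(G,x)^{(\alpha(F))}$ back to $i(G,x)^{(\alpha(F)-1)}$, then to $i(G,x)^{(\alpha(F)-2)}$, and so on down to $i(G,x)$ itself; at each step the largest real part among the roots does not decrease (all the intermediate polynomials are nonconstant, their degrees being at least $\alpha(H)-\alpha(F)\ge 3$). Since $R$ was arbitrary, this gives roots of $i(G,x)$ with arbitrarily large real part once $k$ is taken large enough. I do not anticipate a genuine obstacle here; the argument is essentially bookkeeping, and the only minor points needing care are (i) matching the degree count so that part~2 of Lemma~\ref{lemarblargeRHProots} is legitimately invoked, and (ii) observing that Corollary~\ref{glcor}, although phrased for a root of real part exactly $R$, may be applied with $R$ replaced by the (larger) real part actually attained, so that the chain of $\alpha(F)$ applications closes up correctly.
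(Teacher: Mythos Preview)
Your proposal is correct and follows essentially the same route as the paper: both use the join identity, differentiate $\alpha(F)$ times to reduce to part~2 of Lemma~\ref{lemarblargeRHProots}, and then invoke Gauss--Lucas (Corollary~\ref{glcor}) to lift the root back to $i(G,x)$. Your write-up is in fact slightly more careful, spelling out the iterated application of Corollary~\ref{glcor} and the degree check, whereas the paper handles these in a single sentence.
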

\begin{proof}
Let $R>0$. Assume $\alpha(H)\ge \alpha(F)+3$. Also let $c$ be the coefficient of $x^{\alpha(F)}$ in $i(F,x)$ (it is the number of independent sets of $F$ of maximum cardinality). We take the $\alpha(F)$-th derivative of $i(G,x)$, denoted $i^{\langle \alpha(F)\rangle}(G,x)$. Since $$i(G,x)=i(H,x)+ki(F,x)-(k-1),$$
we have
$$i^{\langle \alpha(F) \rangle}(G,x)=i^{\langle \alpha(F) \rangle}(H,x)+c\cdot k\cdot \alpha(F)!$$ 

Since $\alpha(H)\ge \alpha(F)+3$, the polynomial $i^{\langle \alpha(F) \rangle}(H,x)$ has degree at least $3$. We also know that $c$ and $\alpha(F) !$ are both at least $1$ so we may choose a sufficiently large $k$ and apply Lemma \ref{lemarblargeRHProots} to show that $i^{\langle \alpha(F) \rangle}(G,x)$ has (nonreal) roots with arbitrarily large real parts. By Corollary~\ref{glcor}, the same is true of $i(G,x)$.
\end{proof}

Since the independence number of a complete graph is $1$, the following corollary follows immediately.

\begin{cor}\label{propjoincliquelargerealpart}
Let $G$ be a graph with independence number at least $4$, and let $R>0$. Then for all $m$ sufficiently large, $i(G+K_m,x)$ has a root with real part greater than $R$.
\end{cor}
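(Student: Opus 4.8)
The plan is to reduce immediately to Proposition \ref{prop:joingraphind3}. The key observation is that the complete graph $K_m$ is itself an iterated join of trivial graphs: $K_m = \underbrace{K_1 + K_1 + \cdots + K_1}_{m}$, since pairwise joining $m$ isolated vertices yields exactly $K_m$. Consequently $G + K_m = G + \underbrace{K_1 + \cdots + K_1}_{m}$ has precisely the form treated in Proposition \ref{prop:joingraphind3}, with $H = G$, $F = K_1$, and $k = m$.

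What remains is to verify the hypothesis $\alpha(H) \ge \alpha(F) + 3$ of that proposition. Since $\alpha(K_1) = 1$, this reads $\alpha(G) \ge 4$, which is exactly our standing assumption. Proposition \ref{prop:joingraphind3} then gives that for $m$ sufficiently large, $i(G + K_m, x)$ has roots with arbitrarily large real part; in particular, choosing $m$ large enough forces a root with real part greater than the prescribed $R$. This is all that is needed, and there is no genuine obstacle here: the analytic work was already carried out in Lemma \ref{lemarblargeRHProots}, Proposition \ref{prop:joingraphind3}, and the Gauss--Lucas step (Corollary \ref{glcor}); the corollary is pure bookkeeping about the join.

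For completeness I would also record an even more direct argument that bypasses Proposition \ref{prop:joingraphind3} and Gauss--Lucas. Since no independent set can meet both sides of a join, every independent set of $G + K_m$ is either an independent set of $G$ or a single vertex of $K_m$, so $i(G + K_m, x) = i(G, x) + mx$. Now $i(G,x)$ has positive coefficients and degree $\alpha(G) \ge 4$, so part 1 of Lemma \ref{lemarblargeRHProots} applies directly with $f = i(G,x)$, producing a root of $i(G,x) + mx = i(G + K_m, x)$ with real part exceeding $R$ once $m$ is large. Either route establishes the corollary.
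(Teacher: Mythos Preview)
Your first argument is exactly the paper's: the paper simply remarks that the corollary follows immediately from Proposition~\ref{prop:joingraphind3} since complete graphs have independence number $1$, which amounts precisely to your decomposition $K_m = K_1 + \cdots + K_1$ with $H=G$, $F=K_1$, $k=m$. Your second, direct argument via $i(G+K_m,x)=i(G,x)+mx$ and part~1 of Lemma~\ref{lemarblargeRHProots} is also correct and in fact slightly cleaner than the paper's route, since it avoids the derivative and the Gauss--Lucas step that Proposition~\ref{prop:joingraphind3} invokes.
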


\begin{cor}\label{cor:indnum4subgraphunstable}
If $G$ is a graph with $\alpha(G)\ge 4$, then $G$ is an induced subgraph of a graph with independence number $\alpha(G)$ that is not stable.
\end{cor}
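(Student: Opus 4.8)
The plan is to realize $G$ as an induced subgraph of a join $G+K_m$ for a suitably large $m$, and then invoke Corollary~\ref{propjoincliquelargerealpart}. First I would observe that $G$ sits inside $G+K_m$ as an induced subgraph: the join operation only adds edges between the vertices of $G$ and the vertices of $K_m$, never within $G$, so the original vertex set of $G$ induces precisely $G$ in $G+K_m$.

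Next I would check that the independence number is unchanged. In any join $G+H$, no independent set can contain both a vertex of $G$ and a vertex of $H$, since all such pairs of vertices are adjacent; hence every independent set lies entirely in $G$ or entirely in $H$, and $\alpha(G+H)=\max\{\alpha(G),\alpha(H)\}$. Taking $H=K_m$ and noting $\alpha(K_m)=1<\alpha(G)$ (as $\alpha(G)\ge 4$), we get $\alpha(G+K_m)=\alpha(G)$ for every $m\ge 1$.

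Finally, I would apply Corollary~\ref{propjoincliquelargerealpart} with, say, $R=1$: there is an $m$ for which $i(G+K_m,x)$ has a root with real part greater than $1$, and in particular a root in the open right half-plane, so $G+K_m$ is not stable. Thus $G+K_m$ is the required supergraph with independence number $\alpha(G)$ containing $G$ as an induced subgraph.

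There is essentially no obstacle here beyond the routine bookkeeping above; all of the analytic work has already been carried out in Lemma~\ref{lemarblargeRHProots}, Proposition~\ref{prop:joingraphind3}, and Corollary~\ref{propjoincliquelargerealpart}. The only point requiring a moment's care is confirming that the join with a clique does not inflate the independence number, which is immediate from the structure of joins.
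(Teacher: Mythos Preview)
Your proposal is correct and follows exactly the same route as the paper: form $G+K_m$, observe that $G$ is an induced subgraph with unchanged independence number since $\alpha(K_m)=1$, and invoke Corollary~\ref{propjoincliquelargerealpart} to conclude nonstability for large $m$. Your write-up is in fact a bit more detailed than the paper's in verifying $\alpha(G+K_m)=\alpha(G)$ and the induced-subgraph claim.
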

\begin{proof}
From Corollary \ref{propjoincliquelargerealpart}, $H=G+K_m$ is not stable for $m$ sufficiently large. Joining a clique does not change the independence number of the graph, so $\alpha(H)=\alpha(G)$ and $G$ is a subgraph of $H$.
\end{proof}
\section{Graphs with Stable Independence Polynomials}\label{secLHP}

While we have seen that graphs with small independence number are stable, what other families of graphs are stable? By direct calculations, graphs on up to at least $10$ vertices and trees on up to at least $20$ vertices have all their independence roots in the LHP. 
As noted earlier, a graph with all real independence roots is necessarily stable since the real independence roots must be negative (as independence polynomials have all positive coefficients). The Chudnovsky-Seymour result therefore implies that all claw-free graphs are stable. What about infinite stable families whose independence polynomials do \underline{not} have  all real roots? We begin by showing that stars (which include the claw $K_{1,3}$) are examples of such graphs. We make use of another well-known result from complex analysis, Rouch\'{e}'s Theorem (see, for example, \cite{FISHER}).

\begin{theorem}[\bfseries Rouch\'{e}'s Theorem]\label{thmrouche}
Let $f$ and $g$ be analytic functions on an open set containing $\gamma$, a simple piecewise smooth closed curve, and its interior. If $|f(z)+g(z)|<|f(z)|$ for all $z\in\gamma$, then $f$ and $g$ have the same number of zeros inside $\gamma$, counting multiplicities. \hfill $\Box$
\end{theorem}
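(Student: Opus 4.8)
The plan is to prove this via the argument principle, by the classical route. First I would extract what the hypothesis gives on the curve itself. For every $z\in\gamma$, $|f(z)+g(z)|<|f(z)|$ forces $f(z)\neq 0$ (otherwise $|g(z)|=|f(z)+g(z)|<|f(z)|=0$, impossible) and $g(z)\neq 0$ (otherwise $|f(z)|=|f(z)+g(z)|<|f(z)|$, impossible). Hence $h:=g/f$ is holomorphic and zero-free on an open neighbourhood of $\gamma$, and dividing the hypothesis by $|f(z)|$ rewrites it as $|1+h(z)|<1$ for all $z\in\gamma$. Thus $h$ maps $\gamma$ into the open disc $D$ of radius $1$ centred at $-1$, and $D$ omits the origin, since the point of $[0,\infty)$ closest to $-1$ is $0$, at distance exactly $1$.

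Second, I would deduce that the loop $h\circ\gamma$ has winding number $0$ about the origin. The disc $D$ is convex, hence simply connected, and since it omits $0$ it carries a single-valued holomorphic branch $L$ of the logarithm, with $L'(w)=1/w$. The set $h^{-1}(D)$ is an open neighbourhood of $\gamma$, and on it $L\circ h$ is a holomorphic primitive of $(L\circ h)'=h'/h$; as $\gamma$ is a closed curve, $\int_\gamma \frac{h'(z)}{h(z)}\,dz=\int_\gamma (L\circ h)'(z)\,dz=0$.

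Third, I would translate this into zero counts. Logarithmic differentiation gives $\frac{h'}{h}=\frac{g'}{g}-\frac{f'}{f}$ near $\gamma$, and since $f$ and $g$ are analytic (no poles) and nonvanishing on $\gamma$, the argument principle yields $\frac{1}{2\pi i}\int_\gamma \frac{f'}{f}\,dz=Z_f$ and $\frac{1}{2\pi i}\int_\gamma \frac{g'}{g}\,dz=Z_g$, where $Z_f$ and $Z_g$ count the zeros of $f$ and $g$ inside $\gamma$ with multiplicity. Subtracting and using the vanishing integral from the previous step, $0=Z_g-Z_f$, i.e.\ $Z_f=Z_g$, as claimed.

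The only step beyond bookkeeping is the middle one: that a loop confined to a disc — more generally, to any simply connected region — missing the origin cannot wind around the origin. I would settle it as above via a holomorphic branch of the logarithm on the convex set $D$, which omits $0$; alternatively one can contract $h\circ\gamma$ to a point within $D\subseteq\Complex\setminus\{0\}$ and invoke homotopy invariance of the winding number. It is worth noting where the analyticity hypothesis enters: it is precisely because $f$ and $g$ have no poles that the two argument-principle integrals equal the zero counts with no correction terms, so the conclusion emerges as an equality of numbers of zeros.
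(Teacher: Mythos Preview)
Your argument is correct and is the classical proof via the argument principle. Note, however, that the paper does not supply its own proof of Rouch\'e's Theorem: it is quoted as a standard result from complex analysis with a citation to Fisher's textbook and closed off with a $\Box$. So there is no paper proof to compare against; your write-up simply fills in what the paper takes as background.
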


\begin{figure}[ht]
\begin{center}
\includegraphics[scale=.3]{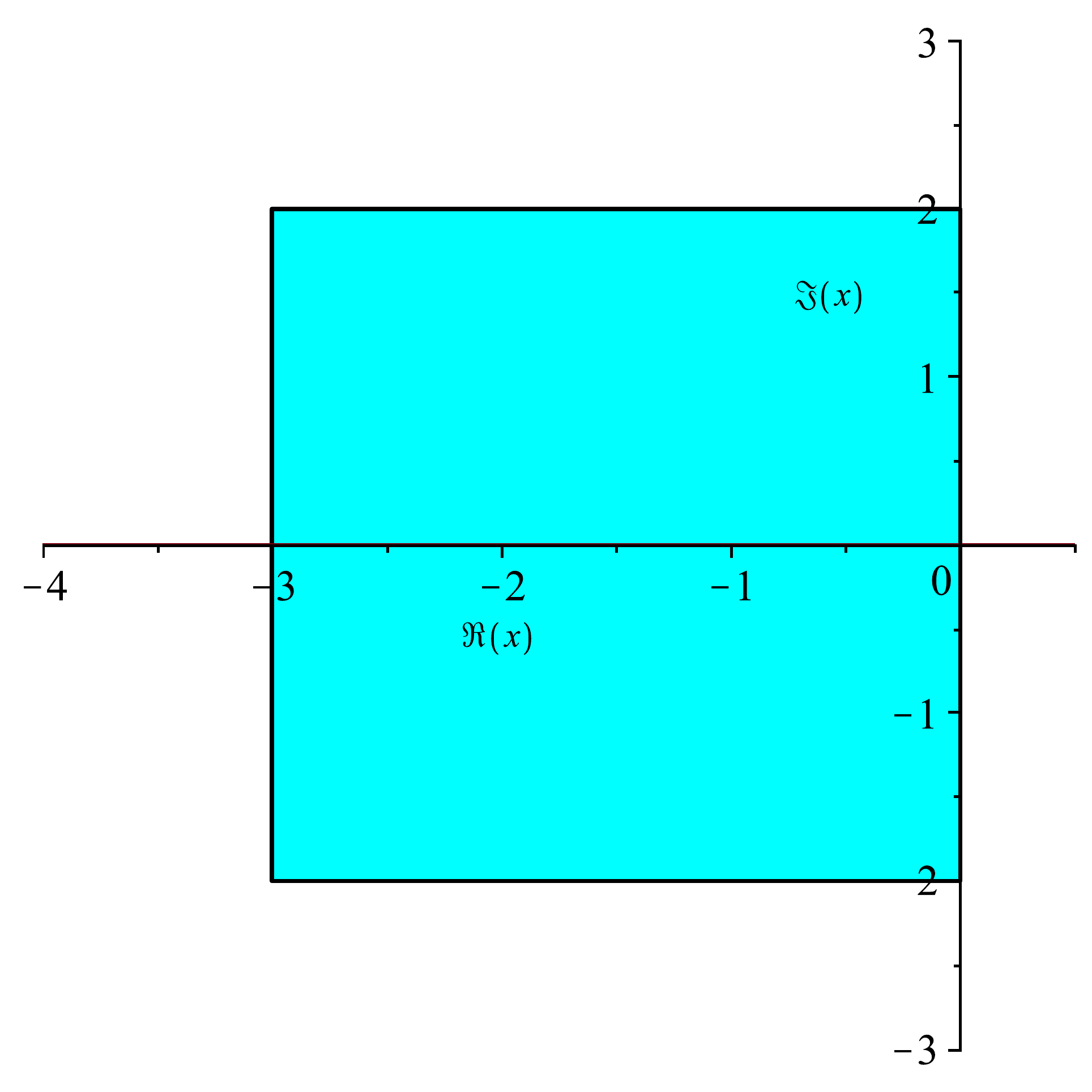}
\caption{The region $\gamma$ in Proposition \ref{propStarsstable}.}\label{figgamma}
\end{center}
\end{figure}

\begin{prop}\label{propStarsstable}
The roots of $i(K_{1,n},x)$ are in the left half-plane.
\end{prop}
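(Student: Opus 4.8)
The plan is to write down $i(K_{1,n},x)$ explicitly and then apply Rouch\'e's Theorem (Theorem~\ref{thmrouche}) on a rectangular contour sitting in the closed right half-plane.

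First I would compute the polynomial. Every independent set of $K_{1,n}$ is either a subset of the $n$ leaves or the single central vertex, so $i_0=1$, $i_1=n+1$, and $i_k=\binom{n}{k}$ for $2\le k\le n$; hence
\[ i(K_{1,n},x)=(1+x)^n+x. \]
It therefore suffices to show that every root of $(1+z)^n+z$ lies in the left half-plane.

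The engine of the argument is the elementary estimate that $|z|<|1+z|^n$ whenever $\text{Re}(z)\ge 0$. Indeed, $|1+z|^2=(1+\text{Re}(z))^2+(\text{Im}(z))^2\ge 1$, so $|1+z|\ge 1$ and hence $|1+z|^n\ge |1+z|$; and $|1+z|^2-|z|^2=1+2\,\text{Re}(z)\ge 1>0$, so $|1+z|>|z|$. Combining, $|1+z|^n\ge|1+z|>|z|$, as claimed. Now, for $M>0$, let $\gamma=\gamma_M$ be the rectangular contour with corners $\pm iM$ and $M\pm iM$ (see Figure~\ref{figgamma}); the curve $\gamma_M$ together with its interior lies in $\{z\in\Complex:\text{Re}(z)\ge 0\}$. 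Take $f(z)=-(1+z)^n$ and $g(z)=i(K_{1,n},z)=(1+z)^n+z$, so that $f(z)+g(z)=z$. The estimate above gives $|f(z)+g(z)|=|z|<|1+z|^n=|f(z)|$ for every $z\in\gamma_M$, so by Rouch\'e's Theorem $f$ and $g$ have the same number of zeros inside $\gamma_M$. The only zero of $f$ is $z=-1$, which has $\text{Re}(z)=-1<0$ and so lies outside $\gamma_M$; hence $g=i(K_{1,n},\cdot)$ has no zeros inside $\gamma_M$. Letting $M\to\infty$ shows that $i(K_{1,n},x)$ has no root with positive real part, and since the strict inequality holds on the segment of $\gamma_M$ lying on the imaginary axis (equivalently, a root $z=it$ would force $(1+t^2)^{n/2}=|t|$, which is false), there is no root with $\text{Re}(z)=0$ either. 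Thus all independence roots of $K_{1,n}$ lie in the left half-plane.

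I do not anticipate a real obstacle here: the inequality above does essentially all the work --- in fact it already yields the result without Rouch\'e, since any root $z$ of $(1+z)^n+z$ satisfies $|1+z|^n=|z|$, which is impossible when $\text{Re}(z)\ge 0$. The only points requiring a bit of care are checking that the Rouch\'e inequality is strict on \emph{every} side of the rectangle $\gamma_M$, including the imaginary-axis side, and the passage from a single rectangle to the whole right half-plane, which can be handled either by the limiting argument above or by first observing that all roots of $(1+z)^n+z$ have modulus bounded by a constant depending only on $n$.
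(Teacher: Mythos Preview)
Your proof is correct, and in fact cleaner than the paper's. Both arguments rest on the same inequality $|z|<|1+z|^n$ and invoke Rouch\'e with $f(z)=-(1+z)^n$ and $g(z)=(1+z)^n+z$, but the contours are placed on opposite sides of the imaginary axis. The paper takes the fixed rectangle with corners $-3\pm 2i$ and $\pm 2i$ in the \emph{left} half-plane, checks the Rouch\'e inequality piecewise on each of the four sides (with separate case analyses, requiring $n\ge 2$), and concludes that all $n$ zeros of $g$ lie inside because the single zero of $f$ at $z=-1$ (of multiplicity $n$) does. You instead put the rectangle in the \emph{right} half-plane, where the inequality $|z|<|1+z|^n$ follows immediately from $|1+z|^2-|z|^2=1+2\,\text{Re}(z)\ge 1$ and $|1+z|\ge 1$, and conclude that $g$ has no zeros there because $f$ does not. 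Your observation that Rouch\'e is actually superfluous---a root with $\text{Re}(z)\ge 0$ would force $|1+z|^n=|z|$, contradicting the strict inequality---is a genuine simplification over both versions. One small cosmetic point: your reference to Figure~\ref{figgamma} is misleading, since that figure depicts the paper's left-half-plane rectangle rather than your right-half-plane one.
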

\begin{proof}
Let $G=K_{1,n}$; then $i(G,x)=x+(1+x)^n$. Let $f(z)=-(1+z)^n$ and $g(z)=(1+z)^n+z$ and set 
\begin{itemize}
\item $\gamma_1=\{z:\text{Re}(z)=0\ \text{and}\ -2\le\text{Im}(z)\le2\}$, 
\item $\gamma_2=\{z:-3\le\text{Re}(z)\le0\ \text{and}\ \text{Im}(z)=2\}$, 
\item $\gamma_3=\{z:\text{Re}(z)=-3\ \text{and}\ -2\le\text{Im}(z)\le2\}$, and 
\item $\gamma_4=\{z:-3\le\text{Re}(z)\le0\ \text{and}\ \text{Im}(z)=-2\}$. 
\end{itemize}
Let $\gamma$ be the curve consisting of four line segments $\gamma_1,\gamma_2,\gamma_3,\gamma_4$, i.e. $\gamma=\gamma_1+\gamma_2+\gamma_3+\gamma_4$, see Figure \ref{figgamma}. The functions $f$ and $g$ are clearly analytic on $\Complex$ which contains $\gamma$ and its interior. The curve $\gamma$ is a simple piecewise smooth closed curve so the hypotheses of Rouch\'{e}'s Theorem are satisfied. 

We now show that $|f(z)+g(z)|<|f(z)|$ for all $z\in \gamma$ (we actually consider their squares to simplify computations). Note that $|f(z)+g(z)|=|z|$. As $i(K_{1,1},x)=1+2x$ has only one root at $\frac{-1}{2}$, we will assume $n\ge2$. 

\vspace{0.15in}
\noindent \textbf{Case 1:} If $z\in \gamma_1$, then $z=ki$ where $-2\le k\le2$. Now, $|z|^2=k^2$ and $|f(z)|^2=|1+x|^{2n}=(1+k^2)^{n}$. Clearly $k^2<(1+k^2)^{n}$, so it follows that $|f(z)+g(z)|^{2}<|f(z)|^{2}$, and hence $|f(z)+g(z)|<|f(z)|$ for all $z\in \gamma_1$.

\vspace{0.15in}
\noindent \textbf{Case 2:} If $z\in \gamma_2$, then $z=k+2i$ where $-3\le k\le0$.  In this case $|z|^2=k^2+4$ and $|(1+x)^{n}|^2=((1+k)^2+4)^{n}$. As in case 1, it suffices to show $((1+k)^2+4)^2>k^2+4$ since $n\ge 2$.  Now $h(k)=((1+k)^2+4)^2-k^2-4$ takes on the value $59$ at $k=1$ and it can be shown that $h(k)$ has no real roots. Therefore, $((1+k)^2+4)^2>k^2+4$ for all $k$ and hence $|f(z)+g(z)|<|f(z)|$ for all $z\in \gamma_2$.

\vspace{0.15in}
\noindent \textbf{Case 3:} If $z\in \gamma_3$, then $z=-3+ki$ where $-2\le k\le 2$. Now, $|z|^2=9+k^2$ and $|(1+z)^n|^2=(4+k^2)^{n}$. It suffices to show that $9+k^2<(4+k^2)^2$ since $n\ge 2$. Evaluating at $k=0$ $(4+k^2)^2-k^2-9$ takes on the value $7$ and it has no real roots. Hence the inequality holds for all $k$, and so $|f(z)+g(z)|<|f(z)|$ for all $z\in \gamma_3$. 

\vspace{0.15in}
\noindent \textbf{Case 4:} If $z \in \gamma_4$, then $z=k-2i$ where $-3\le k\le0$. If we set $w = \bar{z} = k+2i$, then $|z|^2 = |w|^2 = k^2 + 4$ and $|(1+z)^n| = |(1+w)^n| = ((1+k)^2+4)^2$ so we conclude our result from our proof for case 2.

\vspace{0.15in}
All cases together show that for all $z\in\gamma$, $|f(z)+g(z)|<|f(z)|$. Therefore by Rouch\'{e}'s Theorem we know that $f$ and $g$ have the same number of zeros inside $\gamma$ counting multiplicities. We know that $f$ has one root of multiplicity $n$ at $z=-1$ which is inside $\gamma$. Therefore $g(z)=i(G,z)$ has all $n$ of its roots in $\gamma$ which is contained in the (open) left half-plane.
\end{proof}

We now extend the star family to a much larger family of graphs that are also stable.
The \textit{corona} of a graph $G$ with a graph $H$, denoted $G\circ H$, is defined by starting with the graph $G$, and for each vertex $v$ of $G$, joining a new copy $H_{v}$ of $H$ to $v$. The graph $G\circ H$ has $|V(G)|+|V(G)||V(H)|$ vertices and $|E(G)|+|V(G)||E(H)|+|V(G)||V(H)|$ edges. For example, the star $K_{1,n}$ can be thought of as $K_1\circ \overline{K_n}$. See Figure \ref{figcorona} for an example of the corona of two other graphs. There is a nice relationship between the independence polynomials of $G$, $H$, and $G\circ H$ that was first described by Gutman \cite{Gutman1992}.

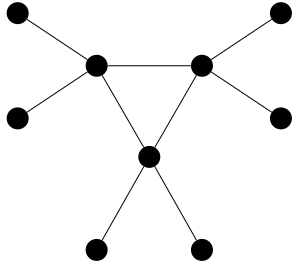
\begin{figure}[htp]
\def\c{0.7}
\def\r{1}
\centering
\scalebox{\c}{
\begin{tikzpicture}
\begin{scope}[every node/.style={circle,thick,draw,fill}]
    \node (1) at (-1*\r,0*\r) {};
    \node (2) at (1*\r,0*\r) {};
    \node (3) at (0*\r,-1.73205*\r) {};
    
    \node (4) at (2.5*\r,1*\r) {};
    \node (5) at (2.5*\r,-1*\r) {};
    
    \node (6) at (1*\r,-3.5*\r) {};
    \node(7) at (-1*\r,-3.5*\r) {};
    
    \node (8) at (-2.5*\r,1*\r) {};  
    \node (9) at (-2.5*\r,-1*\r) {};  
\end{scope}

\begin{scope}
    \path [-] (1) edge node {} (2);
    \path [-] (1) edge node {} (3);
    \path [-] (2) edge node {} (3);
    
    \path [-] (2) edge node {} (4);
    \path [-] (2) edge node {} (5);
    
    \path [-] (1) edge node {} (8);
    \path [-] (1) edge node {} (9);
    
    \path [-] (3) edge node {} (6);
    \path [-] (3) edge node {} (7);
\end{scope}

\end{tikzpicture}}
\caption{The graph $K_3\circ\overline{K_2}$}\label{figcorona}
\end{figure}

\begin{theorem}[\cite{Gutman1992}]\label{propcoronaformula}
If $G$ and $H$ are graphs with $G$ on $n$ vertices, then $$i(G\circ H,x)=i\left(G,\tfrac{x}{i(H,x)}\right)i(H,x)^n.$$
\end{theorem}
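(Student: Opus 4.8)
The plan is to give a direct combinatorial argument, classifying the independent sets of $G\circ H$ according to how they meet the central copy of $G$, and reading off the generating polynomial. First I would fix notation: write $V(G)=\{v_1,\dots,v_n\}$ and, for each $i$, let $H_{v_i}$ denote the copy of $H$ attached to $v_i$. The edges of $G\circ H$ are exactly the edges of $G$, the edges internal to each $H_{v_i}$, and all edges $v_iu$ with $u\in V(H_{v_i})$; in particular distinct copies $H_{v_i},H_{v_j}$ are non-adjacent, and $H_{v_i}$ is non-adjacent to $v_j$ for $j\neq i$.

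The key structural observation I would record next is the characterization of independent sets: a set $S\subseteq V(G\circ H)$ is independent if and only if $A:=S\cap V(G)$ is independent in $G$, each $S\cap V(H_{v_i})$ is independent in $H_{v_i}$, and $S\cap V(H_{v_i})=\emptyset$ whenever $v_i\in S$ (because $v_i$ dominates $H_{v_i}$). The point to verify carefully is that these are the \emph{only} constraints — there is no interaction between different copies, and the choice in $H_{v_i}$ for $v_i\notin A$ is unconstrained — which is exactly what produces the product structure below.

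Then I would compute $i(G\circ H,x)=\sum_S x^{|S|}$ by first summing over the independent set $A$ of $G$. Given such an $A$, the vertices of $A$ contribute $x^{|A|}$; for each $v_i\in A$ we are forced to take $S\cap V(H_{v_i})=\emptyset$, a factor of $1$; and for each of the $n-|A|$ vertices $v_i\notin A$ the set $S\cap V(H_{v_i})$ ranges over all independent sets of $H_{v_i}\cong H$, a factor of $i(H,x)$. Hence the contribution of all $S$ with $S\cap V(G)=A$ is $x^{|A|}i(H,x)^{n-|A|}$, and summing over independent sets $A$ of $G$ gives
\[
i(G\circ H,x)=\sum_{A}x^{|A|}i(H,x)^{n-|A|}=i(H,x)^n\sum_{A}\left(\frac{x}{i(H,x)}\right)^{|A|}=i(H,x)^n\, i\!\left(G,\frac{x}{i(H,x)}\right).
\]

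Finally I would note that, although the middle step formally divides by $i(H,x)$, the result is a genuine polynomial identity in $x$: every monomial $i_k(G)y^k$ of $i(G,y)$ has $k\le\alpha(G)\le n$, so multiplying $i\!\left(G,x/i(H,x)\right)$ by $i(H,x)^n$ clears all denominators. There is no serious obstacle here — the only delicate point is confirming that the independent sets of the corona decompose as claimed, with the copies $H_{v_i}$ contributing independently of each other and of $A$, since that is precisely what makes the substitution $x\mapsto x/i(H,x)$ emerge.
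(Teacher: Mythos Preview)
Your argument is correct. The paper does not supply its own proof of this theorem; it is quoted from Gutman~\cite{Gutman1992} and used as a black box, so there is nothing in the paper to compare your approach against. What you have written is the standard combinatorial derivation: classify independent sets of $G\circ H$ by their intersection $A$ with $V(G)$, observe that each $H_{v_i}$ with $v_i\in A$ contributes a factor $1$ while each $H_{v_i}$ with $v_i\notin A$ contributes an independent factor $i(H,x)$, and sum. The structural characterization of independent sets in the corona is exactly right, and your remark that $\alpha(G)\le n$ clears the denominators is the clean way to see the result is a genuine polynomial identity.
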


One special case of the corona product that is particularly useful is the corona with $K_1$. The end result is adding a pendant vertex to each vertex of the graph. The product $G\circ K_1$ is often denoted $G^{\ast}$ and called the \textit{graph star of $G$} \cite{ToppVolkmann1990, INDPOLY}; from above it has independence polynomial $$i(G\circ K_{1},x)=i\left(G,\tfrac{x}{i(H,x)}\right)(1+x)^n.$$
It is easily seen that $G^{\ast}$ is always very well-covered, that is, all maximal independent sets contain exactly half the vertex set.

For a graph $G$ and positive integer $k$, let $G^{k\ast}$ denote the {\it graph $k$--star of $G$}, that is, the graph formed by iteratively attaching pendant vertices $k$ times:
\[ G^{k\ast} = \left\{ \begin{array}{ll}
                               G^\ast & \mbox{if $k = 1$},\\
                               (G^{(k-1)\ast})^\ast & \mbox{if $k \geq 2$.}
                              \end{array}  
                     \right. 
\]   

We now show that the graph star operation preserves the stability of independence polynomials. The proof uses properties of {\em M\"{o}bius transformations}, which are rational functions of the form $$T(z)=\frac{az+b}{cz+d}$$ where $a,b,c,d,z\in \Complex$ and $ad-bc\neq 0$. More background on M\"{o}bius transformations can be found, for example, in section 3.3 of Fisher's book \cite{FISHER}.
\begin{prop}\label{thmrootsregion}
If the roots of $i(G,x)$ lie outside of the region bounded by the circle with with radius $\tfrac{1}{2}$ centred at $\tfrac{1}{2}$, then $i(G^*,x)$ is stable.
\end{prop}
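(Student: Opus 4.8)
The plan is to push the roots of $i(G,x)$ through Gutman's corona formula (Theorem~\ref{propcoronaformula}) and to recognize the circle in the hypothesis as exactly the curve that the relevant M\"obius transformation carries onto the imaginary axis. Specializing Theorem~\ref{propcoronaformula} to $H=K_1$, where $i(K_1,x)=1+x$, I would write, with $n=|V(G)|$,
\[ i(G^{\ast},x)=i\Bigl(G,\tfrac{x}{1+x}\Bigr)(1+x)^n=\sum_{k=0}^{\alpha(G)}i_k\,x^k(1+x)^{n-k}; \]
the rightmost expression shows that $i(G^{\ast},x)$ is a genuine polynomial, of degree $n$ since its leading coefficient is $i(G,1)>0$. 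Now take any root $z_0$ of $i(G^{\ast},x)$. If $z_0=-1$ then $\text{Re}(z_0)=-1\le 0$ and there is nothing to do; otherwise $(1+z_0)^n\neq 0$, so $i\bigl(G,\tfrac{z_0}{1+z_0}\bigr)=0$, i.e. $w_0:=\tfrac{z_0}{1+z_0}$ is a root of $i(G,x)$. By hypothesis $w_0$ lies outside the disk $D=\{w:|w-\tfrac12|<\tfrac12\}$, i.e. $|w_0-\tfrac12|\ge\tfrac12$. A one-line manipulation gives $1-w_0=\tfrac{1}{1+z_0}\neq 0$ and $z_0=\tfrac{w_0}{1-w_0}$, so the whole statement reduces to the claim: if $w\neq 1$ and $|w-\tfrac12|\ge\tfrac12$, then $\text{Re}\bigl(\tfrac{w}{1-w}\bigr)\le 0$.

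This claim is where essentially all the content sits, and I would verify it directly: writing $w=u+iv$, one computes $\text{Re}\bigl(\tfrac{w}{1-w}\bigr)=\dfrac{u-u^2-v^2}{(1-u)^2+v^2}$, whose denominator is positive, so the inequality is equivalent to $u-u^2-v^2\le 0$, which rearranges to $(u-\tfrac12)^2+v^2\ge\tfrac14$, i.e. to $|w-\tfrac12|\ge\tfrac12$. (Conceptually this is the fact that the M\"obius transformation $w\mapsto\tfrac{w}{1-w}$ fixes $0$, sends $1\mapsto\infty$, $\tfrac12\mapsto 1$, and $\tfrac12+\tfrac{i}{2}\mapsto i$, hence maps $\partial D$ onto the imaginary axis and the interior of $D$ onto the open right half-plane; therefore the exterior of $D$ maps into the left half-plane, with the point at infinity going to $-1$, which is exactly why the extra roots of the factor $(1+x)^n$ at $z=-1$ are harmless.) Applying the claim to $w_0$ gives $\text{Re}(z_0)\le 0$, and since $z_0$ was an arbitrary root, $i(G^{\ast},x)$ is stable.

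I do not expect a serious obstacle here beyond pinning down the M\"obius identification and the short inequality above; the one point that needs care is the bookkeeping around $z_0=-1$ (and the dual point $w_0=1$, which in fact never occurs because $1-w_0=(1+z_0)^{-1}\neq 0$), so that the $(1+x)^n$ factor is properly accounted for and no division by zero slips into the argument. If one prefers to avoid the explicit $u,v$ computation, the same conclusion follows from the standard facts that a M\"obius transformation sends circles and lines to circles and lines and maps the two complementary regions of such a curve to the two complementary regions of its image, together with checking one interior point (say $\tfrac12\mapsto 1$) to fix the orientation.
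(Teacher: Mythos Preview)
Your proof is correct and follows essentially the same route as the paper: both invoke Gutman's corona formula to reduce the roots of $i(G^{\ast},x)$ to preimages under the M\"obius map $z\mapsto z/(1+z)$ of the roots of $i(G,x)$, and then identify the circle $|w-\tfrac12|=\tfrac12$ as precisely the image of the imaginary axis (with the exterior corresponding to the closed LHP). Your version is a bit more careful with the bookkeeping at $z_0=-1$ and $w_0=1$ and supplies the explicit $u,v$ computation where the paper appeals only to the circles-to-circles property of M\"obius transformations, but the argument is the same.
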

\begin{proof}
Let $C$ be the circle with center $z = 1/2$ and radius $1/2$.
Note that the image of the imaginary axis,  $\{ z:\text{Re}(z)=0\}$, under the M{\"o}bius transformation $f(z)=\tfrac{z}{1+z}$ is $C$ (one need only observe that the image of the points $0$, $i$, and $-i$ are $0$, $\tfrac{1}{2}+\tfrac{1}{2}i$, and  $\tfrac{1}{2}-\tfrac{1}{2}i$, respectively). Moreover, as M{\"o}bius transformations send lines and circles to lines and circles, and the interiors/exteriors of circles and half-planes of lines to the same set, we find that the open right half-plane gets mapped to the interior of the circle $C$ (as $\tfrac{1}{2}$, which is in the open RHP, gets mapped to $\tfrac{1}{3}$, which is in the interior of $C$). It follows that the open LHP gets mapped to the exterior of $C$.

The roots of $i(G^*,x)$, along with $-1$ to some multiplicity, are found by solving $f(z)=r$ for every root $r$ of $i(G,x)$ since $i(G^*,x)=(1+x)^{n}i(G,\tfrac{x}{1+x})$ by Proposition \ref{propcoronaformula}. Therefore, if $i(G,x)$ has roots outside of $C$, then $i(G^*,x)$ is stable.

\end{proof}
\begin{figure}[ht]
\begin{center}
\includegraphics[scale=.3]{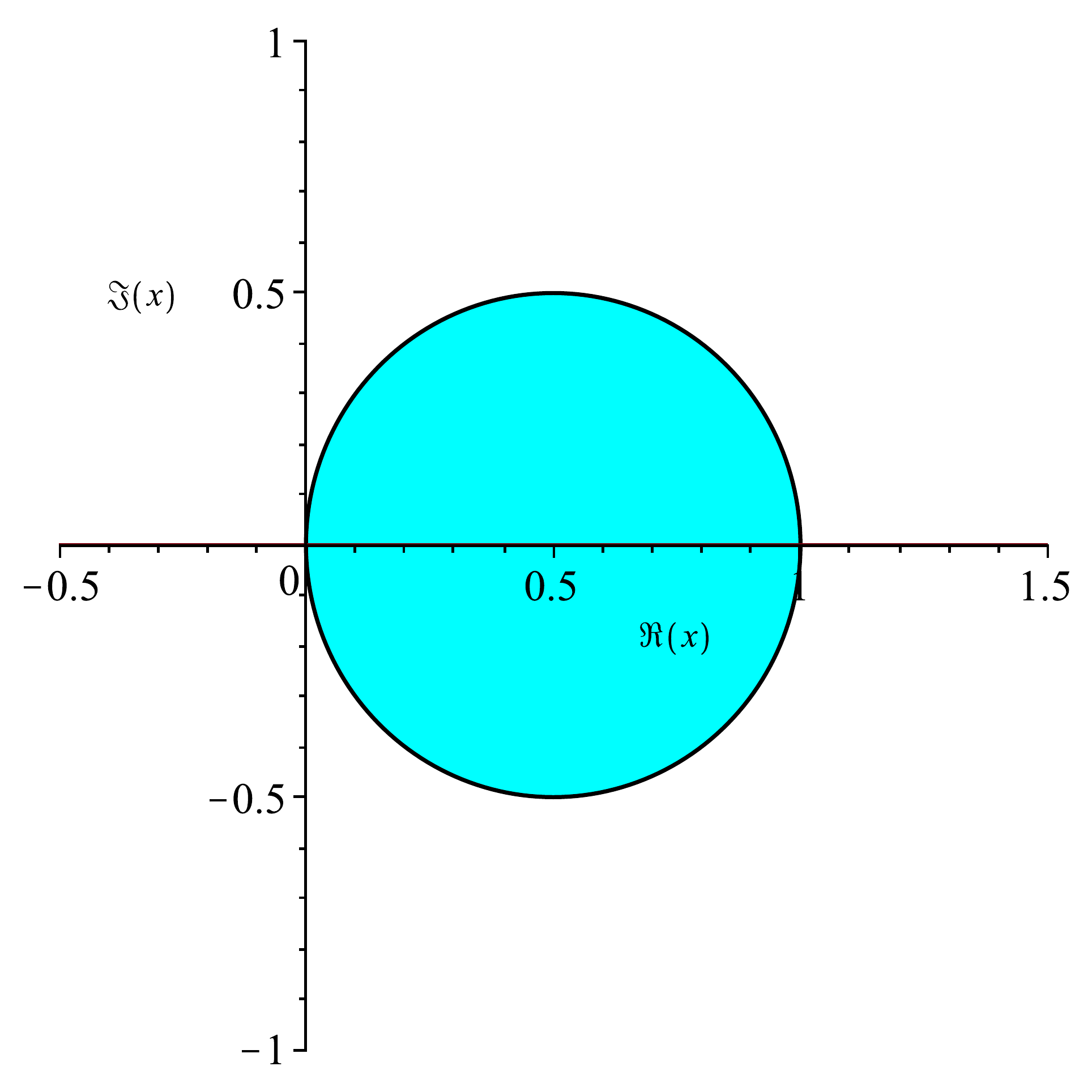}
\caption{Region in Proposition \ref{thmrootsregion}.}
\end{center}
\end{figure}

In Section \ref{secSmallIndNum} we will show that for $\alpha(G)\le 3$, $i(G,x)$ is always stable. That upcoming discussion together with Proposition \ref{thmrootsregion} and Theorem \ref{propStarsstable} proves the following corollary.

\begin{cor}\label{corLHPfamilies}
If $G$ is a claw-free graph, $G=K_{1,n}$, or $\alpha(G)\le 3$, then the graph $k$--star of $G$ is stable for all $k\ge 1$.
\end{cor}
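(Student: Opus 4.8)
The plan is to reduce the corollary to two ingredients: first, that every graph in each of the three listed families is itself stable, and second, that the graph star operation $G \mapsto G^{*}$ preserves stability. Granting these, the corollary is an immediate induction on $k$. The base case $k=1$ is precisely the statement ``$G$ stable $\Rightarrow$ $G^{*}$ stable,'' and for $k \ge 2$ we apply the preservation statement to $G^{(k-1)*}$, which is stable by the inductive hypothesis, to conclude that $G^{k*} = (G^{(k-1)*})^{*}$ is stable. Note that this induction never sees the particular family $G$ came from: all that matters is that $G$ itself is stable to begin with.

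The substance is the preservation statement, and it is essentially a geometric observation that feeds Proposition \ref{thmrootsregion}. Let $D$ denote the closed disk bounded by the circle of radius $\tfrac12$ centred at $\tfrac12$. A one-line computation (writing $z = a + bi$ and expanding $|z - \tfrac12|^2 = (a-\tfrac12)^2 + b^2$) shows that $D \subseteq \{z : \text{Re}(z) \ge 0\}$ and, more precisely, that $D$ meets the closed left half-plane only at the origin. On the other hand, every independence polynomial has constant term $i_0 = 1$, so $0$ is never an independence root. Hence if $H$ is stable, then all roots of $i(H,x)$ lie in $\{z : \text{Re}(z) \le 0\} \setminus \{0\}$, which is disjoint from $D$; Proposition \ref{thmrootsregion} then yields that $i(H^{*},x)$ is stable. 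This is exactly the preservation statement.

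It remains to verify that graphs in each of the three families are stable. If $G$ is claw-free, the Chudnovsky--Seymour theorem \cite{Chudnovsky2007} gives that $i(G,x)$ has only real roots; since $i(G,x)$ has nonnegative coefficients and constant term $1$, these roots are negative, so $G$ is stable. If $G = K_{1,n}$, then $i(G,x)$ has all its roots in the left half-plane by Proposition \ref{propStarsstable}, so $G$ is stable. Finally, if $\alpha(G) \le 3$, then $G$ is stable by the result of Section \ref{secSmallIndNum}. Feeding any of these three cases into the induction above proves the corollary.

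I do not anticipate a real obstacle here; the one point that genuinely requires care is the boundary behaviour of the disk $D$, namely confirming that $D$ touches the closed left half-plane only at $0$ and then invoking $i_0 = 1$, so that ``stable'' actually supplies the (strict-exterior) hypothesis that Proposition \ref{thmrootsregion} wants. Everything beyond that is bookkeeping.
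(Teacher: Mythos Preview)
Your proof is correct and follows the same approach the paper intends: the paper simply cites Proposition~\ref{thmrootsregion}, Proposition~\ref{propStarsstable}, and the $\alpha\le 3$ result from Section~\ref{secSmallIndNum} and declares the corollary to follow. You have filled in exactly the details the paper leaves implicit, namely the induction on $k$ and the observation that the closed disk $D$ meets the closed left half-plane only at $0$ (which is never an independence root), so that stability of $H$ feeds the hypothesis of Proposition~\ref{thmrootsregion}.
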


Corollary \ref{corLHPfamilies} provides more families of stable graphs, but can the $k$--star be used to construct more families? It turns out it can be used to show that every graph is eventually stable after iterating the star operation enough times. To prove this we will need an extension of Theorem \ref{propcoronaformula} that works for $G^{k\ast}$ for any $\ge 1$.

\begin{prop}[\cite{browncameron2016}]\label{iteratedleafyprop} For any graph $G$ of order $n$ and any positive integer $k$,
$$i(G^{k\ast},x)=i(G,\tfrac{x}{kx+1})(kx+1)^n\prod_{\ell=1}^{k-1}\left(\ell x+1\right)^{n2^{k-\ell-1}}.$$
\end{prop}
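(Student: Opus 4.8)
The plan is to prove the identity by induction on $k$. The base case $k=1$ is immediate: the product over $\ell$ is empty, so the claim reduces to $i(G^{\ast},x) = i(G,\tfrac{x}{x+1})(x+1)^n$, which is exactly Theorem \ref{propcoronaformula} applied with $H = K_1$ (recall $i(K_1,x) = 1+x$ and $|V(G)| = n$). The engine for the inductive step is the same corona formula, now in the form $i(H^{\ast},x) = i(H,\tfrac{x}{1+x})(1+x)^{|V(H)|}$ applied to $H = G^{(k-1)\ast}$, using the identities $G^{k\ast} = (G^{(k-1)\ast})^{\ast}$ and $|V(G^{(k-1)\ast})| = n2^{k-1}$ (the star operation doubles the order).

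So, assuming the formula for $k-1$, I would write it with an indeterminate $y$ in place of $x$, substitute $y = \tfrac{x}{1+x}$, and multiply through by $(1+x)^{n2^{k-1}}$; by Theorem \ref{propcoronaformula} the result is $i(G^{k\ast},x)$. The substitution collapses neatly: since $(k-1)\cdot\tfrac{x}{1+x} + 1 = \tfrac{kx+1}{1+x}$, the argument $\tfrac{y}{(k-1)y+1}$ becomes $\tfrac{x}{kx+1}$ and the factor $((k-1)y+1)^n$ becomes $(kx+1)^n/(1+x)^n$; similarly $\ell\cdot\tfrac{x}{1+x} + 1 = \tfrac{(\ell+1)x+1}{1+x}$, so after reindexing $\ell \mapsto \ell+1$ the inherited product $\prod_{\ell=1}^{k-2}(\ell y+1)^{n2^{k-\ell-2}}$ turns into $\prod_{\ell=2}^{k-1}(\ell x+1)^{n2^{k-\ell-1}}$ divided by a power of $(1+x)$.

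What then remains is purely exponent bookkeeping: the total power of $(1+x)$ pushed into the denominator by these simplifications is $n + n\sum_{\ell=1}^{k-2}2^{k-\ell-2}$, and since $\sum_{\ell=1}^{k-2}2^{k-\ell-2} = 2^{k-2}-1$ this equals $n2^{k-2}$; subtracting it from the $n2^{k-1}$ supplied by Theorem \ref{propcoronaformula} leaves $(1+x)^{n2^{k-2}}$, which is precisely the $\ell = 1$ term missing from $\prod_{\ell=1}^{k-1}(\ell x+1)^{n2^{k-\ell-1}}$. Assembling $i(G,\tfrac{x}{kx+1})$, $(kx+1)^n$, and the completed product yields the claim. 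The one point worth stating carefully is that $i(G,\cdot)$ is being composed with rational functions, so all of the above should be read as identities in $\real(x)$; since $\deg i(G,x) = \alpha(G) \le n$, the expression $i(G,\tfrac{x}{kx+1})(kx+1)^n$ is genuinely a polynomial, and clearing denominators at the end is legitimate. I expect the exponent bookkeeping in this last paragraph — matching the geometric sum and the reindexing against the single target product $\prod_{\ell=1}^{k-1}$ — to be the only real obstacle; everything else is a routine substitution into Theorem \ref{propcoronaformula}.
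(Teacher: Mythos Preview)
Your inductive argument is correct. The substitution $y = \tfrac{x}{1+x}$ behaves exactly as you describe: $(k-1)y+1 = \tfrac{kx+1}{1+x}$ and $\ell y + 1 = \tfrac{(\ell+1)x+1}{1+x}$, the reindexing $\ell \mapsto \ell+1$ shifts the product to $\prod_{\ell=2}^{k-1}$, and the geometric-sum bookkeeping $n + n\sum_{\ell=1}^{k-2}2^{k-\ell-2} = n2^{k-2}$ leaves precisely $(1+x)^{n2^{k-2}}$, which fills in the missing $\ell=1$ term. The remark that $\alpha(G) \le n$ ensures $i(G,\tfrac{x}{kx+1})(kx+1)^n$ is a genuine polynomial is a nice point of rigor.

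As for comparison: the paper does not give its own proof of this proposition --- it is quoted from \cite{browncameron2016} without argument. Induction on $k$ via the corona formula (Theorem~\ref{propcoronaformula}) is the natural route, and your proof is almost certainly what appears in the cited reference.
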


We are now ready to prove our result.

\begin{theorem}
Let $G$ be a graph and $S$ be the set of its independence roots. If $$k>\max_{r\in S}\left\{\frac{\text{\emph{Re}}(r)}{|r|^2}\right\},$$ then $G^{k\ast}$ is stable.
\end{theorem}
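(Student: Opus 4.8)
The plan is to use Proposition~\ref{iteratedleafyprop} to understand exactly where the roots of $i(G^{k\ast},x)$ come from, and then show that the hypothesis on $k$ forces all of them into the open left half-plane. From Proposition~\ref{iteratedleafyprop} the roots of $i(G^{k\ast},x)$ are of two types: the roots of the product $\prod_{\ell=1}^{k-1}(\ell x+1)^{n2^{k-\ell-1}}$ together with the factor $(kx+1)^n$, which are all of the form $-1/\ell$ for $1\le \ell\le k$ and hence negative reals (so automatically in the LHP), and the roots arising from $i\!\left(G,\tfrac{x}{kx+1}\right)$, i.e.\ the preimages under the M\"obius transformation $T(x)=\tfrac{x}{kx+1}$ of the independence roots $r\in S$. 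So the whole problem reduces to showing that for each $r\in S$, the solution(s) $x$ of $\tfrac{x}{kx+1}=r$ lie in the open LHP.

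The key step is to identify the image under $T^{-1}$ of the relevant region, mirroring the argument in Proposition~\ref{thmrootsregion} but with the general parameter $k$. Solving $\tfrac{x}{kx+1}=r$ gives $x=\tfrac{r}{1-kr}$ (valid since $r\ne 1/k$; one should note $1/k$ is positive real and independence roots of smallest modulus are negative, but more simply $r=1/k$ would make $i(G^{k\ast},x)$ have no corresponding finite root at all). I would then compute $\mathrm{Re}\!\left(\tfrac{r}{1-kr}\right)$ directly: writing $r=a+bi$, we get $\tfrac{r}{1-kr}=\tfrac{r(1-k\bar r)}{|1-kr|^2}=\tfrac{r-k|r|^2}{|1-kr|^2}$, so $\mathrm{Re}\!\left(\tfrac{r}{1-kr}\right)=\tfrac{a-k|r|^2}{|1-kr|^2}=\tfrac{\mathrm{Re}(r)-k|r|^2}{|1-kr|^2}$. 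Since $|1-kr|^2>0$, this is $<0$ precisely when $\mathrm{Re}(r)-k|r|^2<0$, i.e.\ when $k>\tfrac{\mathrm{Re}(r)}{|r|^2}$. Taking $k$ to exceed the maximum of $\tfrac{\mathrm{Re}(r)}{|r|^2}$ over all $r\in S$ makes every such $x$ have strictly negative real part. Combined with the observation that the remaining roots are the negative reals $-1/\ell$, all roots of $i(G^{k\ast},x)$ lie in the LHP, so $G^{k\ast}$ is stable.

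I should also handle the degenerate possibility $r=0$: then $\tfrac{r}{1-kr}=0$, which lies on the boundary of the LHP and is therefore still permitted (the LHP is defined here as the closed half-plane $\mathrm{Re}(z)\le 0$); note also that $\tfrac{\mathrm{Re}(0)}{|0|^2}$ is undefined, but $0$ is never an independence root since $i(G,0)=i_0=1\ne 0$, so this case does not actually arise and the maximum in the hypothesis is over a set of genuine (nonzero) roots. I expect the main obstacle to be purely expository rather than mathematical: being careful that Proposition~\ref{iteratedleafyprop} accounts for \emph{all} roots with correct multiplicity (so that no stray root escapes the analysis), and being precise about the excluded value $r=1/k$ and the point at infinity of the M\"obius transformation. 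Once the root set is pinned down, the real-part computation is a one-line calculation, so there is no genuine analytic difficulty — the content is entirely in correctly translating the $k$-star root structure through $T^{-1}$.
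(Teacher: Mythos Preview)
Your proposal is correct and follows essentially the same approach as the paper: apply Proposition~\ref{iteratedleafyprop}, dispose of the obvious negative real roots $-1/\ell$, and for each $r\in S$ compute $\mathrm{Re}\!\left(\tfrac{r}{1-kr}\right)=\tfrac{\mathrm{Re}(r)-k|r|^2}{|1-kr|^2}$ to see that the hypothesis on $k$ forces this to be negative. If anything, your write-up is slightly more careful than the paper's about the edge cases $r=0$ and $r=1/k$.
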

\begin{proof}
Let $|V(G)|=n$ and $$k>\max_{r\in S}\left\{\frac{\text{Re}(r)}{|r|^2}\right\}.$$ Then by Proposition \ref{iteratedleafyprop}, 
$$i(G^{k\ast},x)=i(G,\tfrac{x}{kx+1})(kx+1)^n\prod_{\ell=1}^{k-1}\left(\ell x+1\right)^{n2^{k-\ell-1}}.$$
We know that the rational roots of the form $-\frac{1}{\ell}$ will surely all lie in the LHP so we must only consider the roots of $i(G,\tfrac{x}{kx+1})(kx+1)^{\alpha(G)}$ which can be found by solving for $z$ in $r=\tfrac{z}{kz+1}$ where $r\in S$, that is, $r$ is an independence root of $G$. Let $r\in S$, with $r=a+ib$ and consider the independence root of $G^{k\ast}$ of the form $z=\frac{r}{1-kr}$. Now we have that
\begin{eqnarray}
\text{Re}(z)=\frac{(a^2+b^2)(-k)+a}{(1-ka)^2+b^2}
\end{eqnarray}

So the sign of $\text{Re}(z)$ is the sign of $(a^2+b^2)(-k)+a=|z|^2(-k)$ and 
\begin{align*}
|z|^2(-k)+a&<|z|^2\left(-\frac{a}{|z|^2}\right)+a\\
&=-a+a\\
&=0.
\end{align*}
 
Therefore, $\text{Re}(z)<0$ for all independence roots of $G^{k\ast}$, $z$. Hence $G^{k\ast}$ is stable. 

\end{proof}

The next corollary provides an interesting contrast with different graph operations when compared with Corollary \ref{cor:indnum4subgraphunstable}.

\begin{cor}
Every graph is a subgraph of a stable graph.
\end{cor}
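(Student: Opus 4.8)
The plan is to derive this corollary directly from the theorem immediately preceding it, which states that $G^{k\ast}$ is stable whenever $k > \max_{r\in S}\{\mathrm{Re}(r)/|r|^2\}$, where $S$ is the (finite) set of independence roots of $G$. The key observation is that $G$ itself is an induced subgraph of $G^{k\ast}$ for every $k \ge 1$: the graph $k$--star operation only attaches new pendant vertices (iteratively), never deletes vertices or edges among the original vertex set, so the original copy of $G$ sits inside $G^{k\ast}$ as an induced subgraph.

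First I would note that since $G$ has only finitely many independence roots, the quantity $M := \max_{r\in S}\{\mathrm{Re}(r)/|r|^2\}$ is a well-defined real number (with the convention that the maximum over the empty set, i.e. when $G$ has no vertices, is handled trivially, or one simply observes $S$ is nonempty for any graph with at least one vertex since $i(G,x)$ then has positive degree). Then I would choose any integer $k$ with $k > M$ — for instance $k = \lfloor M \rfloor + 1$ if $M \ge 0$, or $k = 1$ if $M < 0$. By the preceding theorem, $G^{k\ast}$ is stable for this choice of $k$.

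Finally I would invoke the structural fact that $G$ is an induced subgraph of $G^{k\ast}$: each application of the star operation adds one pendant neighbor to every current vertex, so after $k$ iterations the vertex set of $G$ remains present and induces exactly $G$. Combining this with the stability of $G^{k\ast}$ established in the previous step gives the claim: every graph $G$ is an induced (hence, in particular, a) subgraph of the stable graph $G^{k\ast}$.

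There is essentially no obstacle here — the corollary is an immediate packaging of the preceding theorem. The only point requiring a sentence of care is making sure the threshold $M$ is finite (which follows from $S$ being finite) and that a valid integer $k$ exists above it, and noting explicitly that the star operation preserves $G$ as an induced subgraph. I would keep the proof to two or three sentences.
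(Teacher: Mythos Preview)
Your proposal is correct and matches the paper's approach exactly: the corollary is stated without proof immediately after the theorem, so the intended argument is precisely the one you describe---choose $k$ large enough that $G^{k\ast}$ is stable and observe that $G$ sits inside $G^{k\ast}$ as an induced subgraph. The details you add (finiteness of $M$, existence of a valid $k$, and why the star operation preserves $G$) are the right ones to spell out.
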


\section{Nonstable Families of Graphs}\label{secRHPgraphs}
We have seen that, starting with a graph with independence number at least $4$, joining a large clique produces nonstable graphs. In this section we provide more constructions that will produce families of nonstable graphs, the lexicographic product and the corona product. The last construction preserves acyclicity and therefore provides families of nonstable trees, which the construction of the previous section does not (and is surprising, given that we have noted that there are no roots in the RHP for trees of order at most 20).

%

The nonstable graph families that have been discovered so far have many vertices and we do not know the smallest nonstable graphs. We will provide some {\em relatively} small nonstable graphs via Sturm's sequences.

For a real polynomial $f$, the \textit{Sturm sequence} of $f$ is the sequence $f_0,f_1,\ldots,f_k$ where $f_0=f$, $f_1=f'$ and $f_i=-\text{rem}(f_{i-1},f_{i-2})$ for $i\ge 2$, where $\text{rem}(f_{i-1},f_{i-2})$ is the remainder when $f_{i-1}$ is divided by $f_{i-2}$ ($f_k$ is the last nonzero term in the sequence of polynomials of strictly decreasing degrees). Sturm sequences are a very useful tool for determining the nature of polynomial roots due to the following result (see \cite{Sturm}).

\begin{theorem}[\bfseries Sturm's Theorem]\label{thmSturm}
Let $f$ be a polynomial with real coefficients and $(f_0,f_1,\ldots,f_k)$ be its Sturm Sequence. Let $a<b$ be two real numbers that are not roots of $f$. Then the number of distinct roots of $f$ in $(a,b)$ is $V(a)-V(b)$ where $V(c)$ is the number of changes in sign in $(f_0(c),f_1(c),\ldots,f_k(c))$.
\end{theorem}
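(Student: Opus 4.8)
The plan is to prove Sturm's theorem by the classical continuity argument: follow the integer $V(c)$ as $c$ increases from $a$ to $b$, and show that it drops by exactly $1$ each time $c$ passes through a root of $f$ and is otherwise unchanged.

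First I would reduce to the case in which $f$ is squarefree. The defining relations realise $(f_0,\ldots,f_k)$ as a signed Euclidean algorithm on $f$ and $f'$: for each $i$ one has $f_{i-1}=q_if_i-f_{i+1}$ with $\deg f_{i+1}<\deg f_i$, so $g:=\gcd(f,f')$ divides every $f_i$ and $f_k$ is a nonzero scalar multiple of $g$. Dividing the sequence through by $g$ yields a sequence that works as a Sturm chain for the squarefree polynomial $\widehat f:=f/g$ (which has the same real roots as $f$): it obeys the same recurrence, has a nonzero constant final term, satisfies the structural facts (i)--(ii) below, and has second term agreeing in sign with the derivative of $\widehat f$ at every root of $\widehat f$ --- exactly the features the argument uses. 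Moreover, as $a$ and $b$ are not roots of $f$ they are not roots of $g$, so $g$ has a fixed nonzero sign at each of $a$ and $b$, and the division changes neither $V(a)$ nor $V(b)$. Hence we may assume $f$ is squarefree, in which case $f_1=f'$ and $f_k$ is a nonzero constant.

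Next I would record two structural facts. (i) No two consecutive terms $f_i,f_{i+1}$ have a common real root $c$: otherwise $f_{i-1}(c)=q_i(c)f_i(c)-f_{i+1}(c)=0$, and iterating downward forces $f_0(c)=f_1(c)=0$, making $c$ a repeated root of $f$, which is impossible since $f$ is squarefree. (ii) If $0<i<k$ and $f_i(c)=0$, then $f_{i-1}(c)=-f_{i+1}(c)$, and both are nonzero by (i), hence of opposite sign. With these in hand comes the core computation. The function $V$ is locally constant away from the finitely many real roots of the $f_i$, so it suffices to examine a single crossing, at such a root $\rho\in(a,b)$, with $\varepsilon>0$ chosen small enough that $[\rho-\varepsilon,\rho+\varepsilon]$ contains no other root of any $f_i$. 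If $\rho$ is a root only of interior polynomials $f_i$ with $0<i<k$ (the indices involved are then pairwise nonadjacent by (i), so their effects are independent), then across $\rho$ each pair $f_{i-1},f_{i+1}$ keeps its opposite signs, so the block $(f_{i-1},f_i,f_{i+1})$ contributes exactly one sign change at $\rho-\varepsilon$ and at $\rho+\varepsilon$ regardless of the sign of $f_i$, and $V$ is unchanged. If instead $\rho$ is a root of $f_0=f$, then $\rho$ is simple, so $f$ changes sign at $\rho$ while $f_1=f'$ stays nonzero of constant sign on $[\rho-\varepsilon,\rho+\varepsilon]$; checking the two subcases $f'(\rho)>0$ and $f'(\rho)<0$ shows the pair $(f_0,f_1)$ contributes one sign change at $\rho-\varepsilon$ and none at $\rho+\varepsilon$, so $V$ decreases by exactly $1$.

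To finish, since $a$ and $b$ are not roots of $f$ I can push them slightly inward to points $a',b'$ that avoid every root of every $f_i$ without changing $V(a),V(b)$ or the set of roots of $f$ in the interval; summing the per-crossing changes along $[a',b']$ then yields that $V(a)-V(b)$ equals the number of real roots of $f$ in $(a,b)$. I expect the main obstacle to be the bookkeeping in this last stage: fixing a clean convention for counting sign changes of $(f_0(c),\ldots,f_k(c))$ when some entries vanish, so that $V$ really is locally constant and jumps only in the way described; handling crossings at which several of the $f_i$ vanish simultaneously; and disposing of the case that $a$ or $b$ is itself a root of some interior $f_i$. The upfront reduction to the squarefree case is what makes facts (i)--(ii), and thus the whole count, go through cleanly; without it $f$ and $f'$ share roots, fact (i) fails, and the block argument breaks down.
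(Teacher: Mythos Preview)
Your argument is the standard classical proof of Sturm's Theorem and is essentially correct; the reduction to the squarefree case, the two structural facts, and the tracking of $V(c)$ across the roots of the $f_i$ are exactly the ingredients one needs, and you have identified the genuine bookkeeping subtleties (zeros in the sign sequence, simultaneous vanishing of several $f_i$, endpoints landing on roots of interior $f_i$). However, there is nothing to compare against here: the paper does not prove Sturm's Theorem at all. It is quoted as a known result with a reference to Jacobson's \emph{Basic Algebra}, and is then used (together with its Corollary~\ref{corSturm}) as a black box in the proof of Theorem~\ref{thmmultipartite}. So your proposal is not an alternative to the paper's proof --- it simply supplies a proof where the paper gives none.
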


The Sturm sequence $(f_0,f_1,\ldots,f_k)$ of $f$ is said to have \textit{gaps in degree} if there is a $j\le k$ such that $\deg(f_j)<\deg(f_{i-1})-1$. If there is a $j\le k$ such that $f_j$ has a negative leading coefficient, the Sturm sequence is said to have a \textit{negative leading coefficient}. We now have the terminology to state the corollary of Sturm's Theorem (see \cite{BrownHickman2002}) that will be useful for our purposes.

\begin{cor}\label{corSturm}
Let $f$ be a real polynomial whose degree and leading coefficient are positive. Then $f$ has all real roots if and only if its Sturm sequence has no gaps in degree and no negative leading coefficients.
\end{cor}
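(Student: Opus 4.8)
The plan is to wring out of Sturm's Theorem (Theorem~\ref{thmSturm}) the total number of \emph{distinct} real roots of $f$ by letting the endpoints run off to $\pm\infty$, and then to read that number off purely from the degrees and leading coefficients occurring in the Sturm sequence. Writing $d_j=\deg(f_j)$ and letting $c_j$ denote the leading coefficient of $f_j$, I would first observe that as $x\to+\infty$ the sign of $f_j(x)$ is $\operatorname{sign}(c_j)$, while as $x\to-\infty$ it is $\operatorname{sign}(c_j)(-1)^{d_j}$. Because every $c_j$ is nonzero these two limiting sign sequences are well defined, so taking $a\to-\infty$ and $b\to+\infty$ in Theorem~\ref{thmSturm} shows that the number of distinct real roots of $f$ equals $V(-\infty)-V(+\infty)$, where $V(\pm\infty)$ are the numbers of sign changes in the respective limiting sequences.

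Next I would isolate the contribution of each consecutive pair $(f_{j-1},f_j)$ to $V(-\infty)-V(+\infty)$ in terms of the degree drop $\Delta_j=d_{j-1}-d_j\ge 1$. A short case analysis on the parity of $\Delta_j$ shows that when $\Delta_j$ is even the pair contributes equally to $V(-\infty)$ and to $V(+\infty)$, for a net contribution of $0$, whereas when $\Delta_j$ is odd it contributes exactly $+1$ or $-1$, the value being $+1$ precisely when $c_{j-1}$ and $c_j$ have the same sign. Summing over $j$ then gives the chain of inequalities
\[
\#\{\text{distinct real roots of } f\}\;\le\;\#\{\, j : \Delta_j \text{ odd}\,\}\;\le\;\sum_{j=1}^{k}\Delta_j\;=\;d_0-d_k\;=\;d-\deg(f_k),
\]
using $\Delta_j\ge 1$ for the middle step. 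Moreover, equality throughout this chain forces every $\Delta_j$ to equal $1$, i.e.\ no gaps in degree, and forces every odd pair to contribute $+1$; since $c_0>0$, this last condition propagates to $c_j>0$ for all $j$, i.e.\ no negative leading coefficients.

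The final step is to connect $\deg(f_k)$ to the multiplicity structure of $f$. Since the Sturm sequence is the signed Euclidean remainder sequence of $f$ and $f'=f_1$, its last nonzero term $f_k$ is a nonzero scalar multiple of $\gcd(f,f')$, whence $\deg(f_k)=\deg\gcd(f,f')$; and if $f$ has $M$ distinct roots in $\Complex$ (their multiplicities summing to $d$), then $\deg\gcd(f,f')=d-M$. Thus the displayed chain reads $\#\{\text{distinct real roots}\}\le M$, an equality exactly when all distinct roots of $f$ are real, i.e.\ when $f$ has all real roots. Combining this with the preceding paragraph delivers both directions simultaneously: $f$ has all real roots $\iff$ equality holds throughout the chain $\iff$ the Sturm sequence has no gaps in degree and no negative leading coefficients. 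I expect the principal obstacle to be the pairwise sign-change bookkeeping of the second paragraph—tracking how the parity of each degree drop and the relative signs of consecutive leading coefficients feed into $V(-\infty)-V(+\infty)$—together with justifying the identification of $f_k$ with $\gcd(f,f')$ up to a scalar and the resulting degree count.
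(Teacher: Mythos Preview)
The paper does not actually prove Corollary~\ref{corSturm}; it is quoted from \cite{BrownHickman2002} without argument, so there is no in-paper proof to compare against. Your proposal, however, is a correct self-contained derivation. The limiting sign sequences at $\pm\infty$ are exactly as you describe, and your pairwise bookkeeping is right: the contribution of the pair $(f_{j-1},f_j)$ to $V(-\infty)-V(+\infty)$ is $0$ when $\Delta_j$ is even and $\operatorname{sign}(c_{j-1}c_j)$ when $\Delta_j$ is odd. The chain of inequalities and its equality analysis are sound, as is the identification of $f_k$ with a scalar multiple of $\gcd(f,f')$ and the degree formula $\deg\gcd(f,f')=d-M$. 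One small point worth making explicit in the equality analysis: once all $\Delta_j=1$, \emph{every} index $j$ has odd drop, so the condition ``each odd pair contributes $+1$'' becomes $c_{j-1}c_j>0$ for all $j$, and the propagation from $c_0>0$ to all $c_j>0$ goes through cleanly; conversely, with no gaps and all $c_j>0$ you get $V(-\infty)-V(+\infty)=k=d_0-d_k=M$ on the nose.
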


We note that there are families of complete multipartite graphs that are stable. For example, stars are complete bipartite graphs and we have shown that they are stable. 
As well, it is not hard to see that  \[ i(K_{n,n,\ldots,n},x)=k(1+x)^n-(k-1).\] 
The roots of this are $z_k=\left(\frac{k-1}{k}\right)^{1/n}e^{2k\pi/n}-1$ for $k=0,1,\ldots,n-1$. Since $\left(\frac{k-1}{k}\right)^{1/n}<1$ for all $k\ge 1$, it follows that $\text{Re}(z_k)<0$ for all $k$. Therefore, $K_{n,n,\ldots,n}$ is stable for all $n$ and $k$. 

It may seem that all complete multipartite graphs are stable, but such is not the case. We will consider the graphs $K_{1,2,3,\ldots,n}$, the complete multipartite graph with one part of each of the sizes $1,2,\ldots,n$, and use Corollary \ref{corSturm} to prove that these graphs are not stable if $n \geq 15$. 

\begin{theorem}\label{thmmultipartite}
$i(K_{1,2,\ldots,n},x)$ is not stable for $n\ge 15$.
\end{theorem}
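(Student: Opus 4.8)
The plan is to apply Corollary~\ref{corSturm}: I will exhibit, for each $n \ge 15$, a gap in degree or a negative leading coefficient in the Sturm sequence of $i(K_{1,2,\ldots,n},x)$, thereby showing the polynomial does not have all real roots, and then separately rule out the possibility that it is stable despite having nonreal roots. The first task is to get a usable closed form for $i(K_{1,2,\ldots,n},x)$. Since independent sets in a complete multipartite graph are exactly the sets contained in a single part, and the part of size $j$ contributes $\binom{j}{k}$ independent sets of size $k$ (with the empty set counted once overall), I would write $i(K_{1,2,\ldots,n},x) = 1 + \sum_{j=1}^{n}\bigl((1+x)^j - 1\bigr) = (1+x)\cdot\frac{(1+x)^n - 1}{x} - (n-1)$, or more symmetrically $i(K_{1,2,\ldots,n},x) = \sum_{j=0}^{n}(1+x)^j - n = \frac{(1+x)^{n+1}-1}{x} - n$. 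This rational-looking expression is actually a polynomial of degree $n$ in $x$, and it is the handle I will work with.

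The core of the argument is to show this degree-$n$ polynomial has a nonreal root once $n \ge 15$. Rather than computing a full Sturm sequence symbolically in $n$ (which would be hopeless), I expect the cleanest route is to locate a nonreal root directly by an analytic/perturbative argument: write $p_n(x) = \frac{(1+x)^{n+1}-1}{x} - n$ and study its roots. The real roots of $p_n$ lie where $(1+x)^{n+1} = 1 + nx$; for $x > -1$ this is a convexity comparison that forces at most a couple of real roots in that range, while for $x < -1$ the term $(1+x)^{n+1}$ oscillates in sign, pinning down roughly one real root near each even integer translate — but crucially not $n$ of them. A counting argument (for instance, bounding the number of real roots below $-1$ using Descartes' rule of signs on $p_n(-1-t)$ as a polynomial in $t$, or directly via the Sturm-sequence gap criterion evaluated numerically for the base case $n=15$ and then an induction/monotonicity argument on the relevant leading coefficients) should show the real root count is strictly less than $n$. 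The honest statement of the paper likely just verifies the Sturm sequence of $p_{15}$ has a gap in degree by direct (computer-assisted) calculation, and then argues that the structural feature responsible persists for all $n \ge 15$; I would present it that way: compute the Sturm sequence for $n = 15$, observe the gap (or negative leading coefficient), invoke Corollary~\ref{corSturm} to conclude $i(K_{1,2,\ldots,15},x)$ is not stable, and handle $n > 15$ by an analogous computation or by an embedding/monotonicity observation relating $p_{n+1}$ to $p_n$.

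Finally — and this is a step that is easy to forget — showing nonreal roots exist is not by itself enough for nonstability: I must show some root has positive real part. Here I would combine the nonreal root with a symmetry or location estimate. One option: argue the nonreal roots of $p_n$ come in conjugate pairs and use a Routh–Hurwitz / Hermite-Biehler obstruction — check that $p_n^{even} \prec p_n^{odd}$ fails — which is exactly the same mechanism as in the proof of Lemma~\ref{lemarblargeRHProots}, and gives a root in the RHP. Another option, matching the flavour of Corollary~\ref{corSturm}'s use: localize a nonreal root of $p_{15}$ numerically and verify its real part is positive, then argue persistence. I anticipate the main obstacle is precisely this last point — converting ``has a nonreal root'' into ``has a root in the open RHP'' uniformly in $n$, rather than just for $n = 15$ — since the bulk of the nonreal roots of these multipartite polynomials sit near the negative real axis, and only a controlled few stray rightward; pinning that down cleanly (likely via the even/odd-part interlacing failure, which scales gracefully with $n$) is where the real work lies.
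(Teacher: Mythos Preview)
Your proposal has the right ingredients but assembles them in the wrong order, and this leads to a genuine gap. You plan to run a Sturm sequence on $p_n$ itself to show it has a nonreal root, and then separately upgrade ``nonreal root'' to ``root in the RHP''. But as you correctly note, the first step does not give the second, and your suggestions for closing that gap (numerical localization plus persistence, or a vague monotonicity in $n$) are not arguments.

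The paper's move is to reverse the roles of Hermite--Biehler and Sturm. By Theorem~\ref{thmhb}, $p_n$ is stable only if $p_n^{odd}$ has all real (nonpositive) roots. So it suffices to show $p_n^{odd}$ has a nonreal root --- and \emph{that} is where Sturm's theorem is applied, to $p_n^{odd}$ (after the reciprocal substitution $f(x)=x^{\deg}p_n^{odd}(1/x)$), not to $p_n$. This kills your worry in one stroke: a nonreal root of the odd part \emph{is} already non-stability of $p_n$, with no further RHP localization needed.

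Second, you dismiss a symbolic-in-$n$ Sturm computation as ``hopeless''. It is not: the paper computes $f_0,f_1,f_2,f_3,f_4$ of the Sturm sequence of $f$ with leading coefficients that are explicit rational functions of $n$, checks that $c_2,c_3>0$ for the relevant $n$, and shows $c_4<0$ for all $n\ge 17$ by locating the largest real root of its numerator. Only four remainder steps are needed, and the coefficients, while ugly, are perfectly manageable polynomials in $n$. The cases $n=15$ and $n=16$ are then handled individually. Your induction/embedding idea for passing from $n$ to $n+1$ is unnecessary and would be harder than the direct symbolic route.
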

\begin{proof}
We will prove $n=15$ and $n=16$ directly, and then provide a more general argument for $n\ge 17$. For $n=15$,  $i(K_{1,2,\ldots,n},x)$ has a root with real part approximately $0.009053086185689$ and is therefore not stable. For $n=16$, $f_8$ of the Sturm sequence of the odd part of $i(K_{1,2,\ldots,n},x)$ is $$-{\frac{
1577448937796744128202619637524087852027658290220375925735260560}{
79627136162551065499783779429209235652424929298356031742670249}}.
$$ Thus, by Theorem~\ref{thmSturm} and Theorem~\ref{thmhb}, $K_{1,2,\ldots,n}$ is not stable.

Now let $n\ge 17$ and $G=K_{1,2,\ldots,n}$. The independence polynomial of $G$ can be written as 
\begin{align*}
i(G,x)&=\sum_{k=1}^n(1+x)^k-(n-1)\\
&=\frac{(1+x)^{n+1}-(1+x)}{x}-(n-1)\\
&=\frac{(1+x)^{n+1}-nx-1}{x}.
\end{align*}	
Since $i(G,0)=1$, it follows that the nonzero roots of $g=(1+x)^{n+1}-nx-1$ are precisely the roots of $i(G,x)$. Let $g^{odd}$ be the odd part of $g$ as in the Hermite-Biehler Theorem, so $g^{odd}=1+\binom{n+1}{3}x+\binom{n+1}{5}x^2+\cdots+\binom{n+1}{\ell}x^{(\ell-1)/2}$ where 
  \[
    \ell=\left\{
                \begin{array}{ll}
                  n & \mbox{if $n$ is odd}\\
                  n-1 & \mbox{if $n$ is even}\\
                \end{array}
              \right.
  \]
is the largest odd number for which $G$ has an independent with that size. Note that $g^{odd}(0)=1$ as well so $0$ is not a root of $g^{odd}$, therefore, the roots of $g_o$ are all real if and only if the roots of $f=x^ng^{odd}\left(\frac{1}{x}\right)=x^n+\binom{n+1}{3}x^{n-1}+\binom{n+1}{5}x^{n-2}+\cdots+\binom{n+1}{\ell}x^{n-(\ell-1)/2}$ are all real. We will find the Sturm sequence of $f$ and show that it has a negative leading coefficient to prove that $f$ has nonreal roots. 

Let the Sturm sequence of $f$ be $(f_0,f_1,\ldots,f_k)$ ( where $f_0=f$ and $f_1=f'$). Both $f_0$ and $f_1$ are nonzero and have positive leading coefficient. The leading coefficient of $f_2$ is calculated as 
\[ c_2=\frac{{n}^{5}}{36}-{\frac {2\,{n}^{4}}{45}}+\frac{{n}^{3}}{36}-\frac{{n}^{2}}{36}-\frac{n}{18}+{\frac{13}{180}},\] a polynomial in $n$. This polynomial has its largest real root at approximately $1.454179113$, so for $n\ge 2$, $c_2>0$. The third term in the sequence, $f_3$, has leading coefficient $$c_3={\tfrac { \left( n-2 \right)  \left( n-3 \right)  \left( 105\,{n}^{8}+5719\,{n}
^{7}-34103\,{n}^{6}+63299\,{n}^{5}-79478\,{n}^{4}+34046\,{n}^{3}+5068
\,{n}^{2}-15584\,n+55488 \right) 
 n}{35280\, \left( 5\,{n}^{3}-
8\,{n}^{2}+10\,n-13 \right) ^{2}}}.$$
The denominator of $c_3$ is defined and positive for all $n$ as it has no integer roots (easily verified by the Rational Roots Theorem). The numerator's largest real root is approximately $3.587037796$, and thus for $n\ge 4$, $c_3>0$.

We now consider the term $f_4$. The leading coefficient of this term is $$c_4=-{\tfrac { \gamma\left( n+1\right)\left( n-1 \right) \left( n-4 \right)  \left( n-5 \right)   
 \left( 5\,{n}^{3}-8\,{n}^{2}+10\,n-13 \right) ^{2} \left( n+2
 \right) }{40772160\, \left( 105\,{n}^{8}+5719\,{n}^{7}-34103\,{n}^{6}
+63299\,{n}^{5}-79478\,{n}^{4}+34046\,{n}^{3}+5068\,{n}^{2}-15584\,n+
55488 \right) ^{2}}}
$$
where 

\begin{align*}
\gamma=\ &1036035\,{n}^{14}-18710307\,{n}^{13}+60715080\,{n}^{
12}-1252685357\,{n}^{11}+16301479454\,{n}^{10}\\
&-71027287359,{n}^{9}+
150542755560\,{n}^{8}-194411482671\,{n}^{7}+73908295527\,{n}^{6}\\
&+81621340094\,{n}^{5}-183113161400\,{n}^{4}+127579216128\,{n}^{3}-
28712745216\,{n}^{2}\\
&+24221417472\,n+78617640960
\end{align*}
 
The denominator of $c_4$ has its largest root at approximately $3.587037796$, so for 
$n\ge 4$, the denominator is defined and positive. The largest root of the numerator is approximately $16.22715983$, therefore for $n\ge 17$, $c_4<0$. 

Since there are no gaps in degree, as we have ensured $c_2,c_3,$ and $c_4$ are nonzero, and the Sturm sequence has a negative leading coefficient, $c_4$, it follows by Corollary \ref{corSturm} that $f$, and therefore $g^{odd}$, has a nonreal root. Thus, by the Hermite-Biehler Theorem $g$, and therefore $i(G,x)$, is not stable. 

\end{proof}

The join has given us much to discuss in terms of nonstable graphs, but we turn now to another graph operation, the  lexicographic product, for constructing other nonstable graphs. 
The \textit{lexicographic product} (or graph substitution)  is defined as follows. Given graphs $G$ and $H$ such that $V(G)=\{ v_1,v_2,...,v_n\}$ and $V(H)=\{ u_1,u_2,...,u_k\}$, the lexicographic product of $G$ and $H$ which we will denote $G[H]$ is the graph such that $V(G[H] )=V(G)\times V(H)$ and $(v_i,u_l)\sim (v_j,u_m)$ if $v_i\sim _G v_j$ or $i=j$ and $u_l\sim _H u_m$.  The graph $G[H]$, can be thought of as substituting a copy of $H$ for each vertex of $G$.

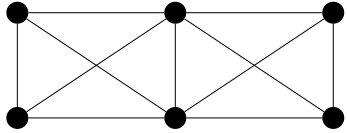
\begin{figure}[htp]
\def\c{0.7}
\def\r{1}
\centering
\scalebox{\c}{
\begin{tikzpicture}
\begin{scope}[every node/.style={circle,thick,draw,fill}]
    \node (1) at (-3*\r,0*\r) {};
    \node (2) at (0*\r,0*\r) {};
    \node (3) at (3*\r,0*\r) {};
    
    \node (4) at (-3*\r,-2*\r) {};
    \node (5) at (0*\r,-2*\r) {};
    \node (6) at (3*\r,-2*\r) {};
 
\end{scope}

\begin{scope}
    \path [-] (1) edge node {} (2);
    \path [-] (2) edge node {} (3);
    
    \path [-] (1) edge node {} (4);
    \path [-] (1) edge node {} (5);
    
    \path [-] (2) edge node {} (4);
    \path [-] (2) edge node {} (5);
    \path [-] (2) edge node {} (6);
    
    \path [-] (3) edge node {} (5);
    \path [-] (3) edge node {} (6);
    
    \path [-] (4) edge node {} (5);
    \path [-] (6) edge node {} (5);
  
\end{scope}

\end{tikzpicture}}
\caption{The lexicographic product $P_3[K_2]$.}
\end{figure}

The reason the lexicographic product has been so important to the study of the independence polynomial is due to the way the independence polynomials interact. 

\begin{theorem}[\cite{INDROOTS}]\label{thmlexicoprodforindpolys}
If $G$ and $H$ are graphs, then $i(G[H],x)=i(G,i(H,x)-1)$.
\end{theorem}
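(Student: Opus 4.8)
The plan is to establish a bijective decomposition of the independent sets of $G[H]$ and then translate it into the claimed generating-function identity. Write $V(G)=\{v_1,\dots,v_n\}$ and let $H_1,\dots,H_n$ denote the copies of $H$ in $G[H]$, with $H_i$ substituted at $v_i$. The first step is the structural observation, read off directly from the definition of the lexicographic product: for $i\neq j$, if $v_i\sim_G v_j$ then every vertex of $H_i$ is adjacent to every vertex of $H_j$ in $G[H]$, while if $v_i\not\sim_G v_j$ then no vertex of $H_i$ is adjacent to any vertex of $H_j$. Consequently, if $S$ is independent in $G[H]$, then the set of ``active'' coordinates $A(S)=\{v_i : S\cap V(H_i)\neq\emptyset\}$ must be independent in $G$, and for each $v_i\in A(S)$ the restriction $S\cap V(H_i)$ is a nonempty independent set of $H_i\cong H$. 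Conversely, given any independent set $A\subseteq V(G)$ together with a choice, for each $v_i\in A$, of a nonempty independent set of $H_i$, the union of these sets is independent in $G[H]$. This yields a bijection between independent sets of $G[H]$ and such pairs.

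Next I would pass to generating functions, tracking total cardinality with the variable $x$. The generating polynomial for the nonempty independent sets of $H$ is $i(H,x)-1$. For a fixed independent set $A\subseteq V(G)$ with $|A|=k$, the copies $H_i$ with $v_i\in A$ are vertex-disjoint and the size of $S$ is the sum of the sizes of its restrictions, so the generating polynomial for those independent sets $S$ of $G[H]$ with $A(S)=A$ is the product $(i(H,x)-1)^k$. Summing over all independent sets $A$ of $G$ and grouping by $|A|=k$ gives
\[
i(G[H],x)=\sum_{A\text{ independent in }G}\bigl(i(H,x)-1\bigr)^{|A|}=\sum_{k=0}^{\alpha(G)} i_k\bigl(i(H,x)-1\bigr)^k=i\bigl(G,\,i(H,x)-1\bigr),
\]
where $i_k$ is the number of independent sets of size $k$ in $G$, and the final equality is simply the definition of $i(G,y)$ evaluated at $y=i(H,x)-1$.

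There is essentially no serious obstacle here; the argument is a routine composition count. The one point requiring care is the bookkeeping around the empty set: the empty independent set of $G[H]$ corresponds to $A=\emptyset$ (the $k=0$ term, contributing $i_0=1$), and within each active copy one must count \emph{nonempty} independent sets, which is exactly what the subtraction in $i(H,x)-1$ encodes. It is also worth stating explicitly, when setting up the bijection, that the lexicographic product joins distinct copies completely or not at all according to adjacency in $G$, so that the notion of ``active coordinate'' interacts with independence in $G$ in precisely the way used above.
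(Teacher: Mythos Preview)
Your argument is correct and is the standard bijective proof of this identity. Note that the paper does not actually prove this theorem; it merely quotes it from \cite{INDROOTS}, so there is no ``paper's own proof'' to compare against. Your write-up supplies exactly the routine composition count that underlies the cited result, with the key bookkeeping point (subtracting $1$ to exclude the empty set in each active copy) handled correctly.
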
 
 
In \cite{INDROOTS} it was shown that the independence roots of the family $\{P_n\}_{n\ge 1}$ are dense in $(-\infty,-\tfrac{1}{4}]$. This leads to another application of Lemma \ref{lemarblargeRHProots}. 

\begin{theorem}
If $H$ is a graph with $\alpha(H)\ge 4$, then for some $n$ sufficiently large, $P_n[H]$ has independence roots with arbitrarily large real part.
\end{theorem}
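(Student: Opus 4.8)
The plan is to combine the lexicographic-product identity (Theorem~\ref{thmlexicoprodforindpolys}), the density of the independence roots of paths in $(-\infty,-\tfrac14]$ from \cite{INDROOTS}, and part~2 of Lemma~\ref{lemarblargeRHProots}. The crucial elementary observation is the following: if $r$ is any independence root of $P_n$ and $z_0\in\Complex$ satisfies $i(H,z_0)-1=r$, then by Theorem~\ref{thmlexicoprodforindpolys}
\[ i(P_n[H],z_0)=i\bigl(P_n,\,i(H,z_0)-1\bigr)=i(P_n,r)=0, \]
so $z_0$ is an independence root of $P_n[H]$. Equivalently, for each independence root $r$ of $P_n$, every root of the shifted polynomial $i(H,x)-1-r=i(H,x)+(-1-r)$ is an independence root of $P_n[H]$.

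First I would fix $R>0$. Since $i(H,x)$ has degree $\alpha(H)\ge 4\ge 3$ and all of its coefficients are positive, Lemma~\ref{lemarblargeRHProots}(2), applied to $f=i(H,x)$, supplies a threshold $L_0>0$ such that for every real $\ell\ge L_0$ the polynomial $i(H,x)+\ell$ has a root with real part greater than $R$. Now $-1-L_0$ lies in $(-\infty,-\tfrac14]$ because $L_0>0$, so by the density of the independence roots of $\{P_n\}_{n\ge1}$ in this interval (\cite{INDROOTS}) there is some $n$ — which may be taken as large as we please, since each individual $P_n$ has only finitely many roots and a finite union of finite sets is not dense — for which $P_n$ has a (necessarily real) independence root $r$ with $r\le -1-L_0$; density near $-2-L_0$ already produces such an $r$. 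Set $\ell:=-1-r$, so that $\ell\ge L_0>0$ and $i(H,x)-1-r=i(H,x)+\ell$. By the choice of $L_0$ this polynomial has a root with real part greater than $R$, and by the observation of the previous paragraph that root is an independence root of $P_n[H]$. Since $R>0$ was arbitrary, for every $R>0$ some $P_n[H]$ has an independence root with real part exceeding $R$; that is, along the family $\{P_n[H]\}$ the independence roots have arbitrarily large real part.

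There is no real computational obstacle here: the argument is essentially bookkeeping once the three ingredients are assembled. The only points requiring a little care are verifying that the value $-1-L_0$ at which we invoke the density statement genuinely lies in $(-\infty,-\tfrac14]$ (immediate, as $L_0>0$) and selecting an honest independence root $r\le -1-L_0$ of some $P_n$ rather than one merely close to that value. Note that, in contrast with the earlier route through Proposition~\ref{prop:joingraphind3}, we do not need Corollary~\ref{glcor} (Gauss--Lucas): here $i(H,x)+\ell$ is an actual divisor of $i(P_n[H],x)$ rather than a derivative of it, so the RHP root is transferred directly. (In fact the same proof goes through whenever $\alpha(H)\ge 3$.)
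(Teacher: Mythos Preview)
Your proof is correct and follows essentially the same approach as the paper's own argument: both use the lexicographic-product identity to reduce the question to finding roots of $i(H,x)+(-1-r)$ for some independence root $r$ of $P_n$, invoke the density of path roots in $(-\infty,-\tfrac14]$ to make $-1-r$ as large as desired, and then apply part~2 of Lemma~\ref{lemarblargeRHProots}. Your write-up is in fact more careful than the paper's in making explicit the threshold nature of the lemma and in arranging that the chosen root $r$ actually satisfies $r\le -1-L_0$ rather than merely lies near some target value; your closing remark that $\alpha(H)\ge 3$ already suffices is also correct, since Lemma~\ref{lemarblargeRHProots}(2) only requires degree at least $3$.
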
 
\begin{proof}
Suppose $H$ is a graph with $\alpha(H)\ge 4$. By Theorem \ref{thmlexicoprodforindpolys}, we know that $i(P_n[H],x)=i(P_n,i(H,x)-1)$ and therefore the independence roots of $P_n[H]$ are found by solving $i(H,x)-1=r$, that is, $i(H,x)-r-1=0$ for all independence roots $r$ of $P_n$. Since we know that $\{P_n\}_{n\ge 1}$ are dense in $(-\infty,\tfrac{1}{4}]$, it follows that we can make $-r-1$ as large, in absolute value, as we like. Finally, since $\alpha(H)\ge 4$, Lemma \ref{lemarblargeRHProots} applies and $i(P_n[H],x)$ has roots with arbitrarily large real parts for $n$ sufficiently large.
\end{proof}

Finally, we consider the stability of trees (we recall that all trees of order $20$ and less have been found to be stable). Could this be true in general? As we have learned from the Chudnovsky-Seymour result on independence roots of claw-free graphs, a small restriction in the graph structure can have a large impact on the independence roots. Our previous constructions for producing graphs with roots arbitrarily far in the RHP did not turn up any trees, and it would be reasonable to speculate that perhaps all trees are stable, but this, in fact, turns out to be false.  Before we can provide a family of trees with nonstable independence polynomials, we first must show that there exist trees with real independence roots arbitrarily close to $0$.

\begin{lem}\label{lemmastarroots}
Fix $\varepsilon > 0$. Then for $n$ sufficiently large, there exists a real root $r$ of $i(K_{1,n},x)$ with $|r| < \varepsilon$.
\end{lem}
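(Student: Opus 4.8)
The plan is to exhibit an explicit real root of $i(K_{1,n},x) = x + (1+x)^n$ near the origin and show it has modulus less than $\varepsilon$ for $n$ large. First I would recall that $i(K_{1,n},0) = 1 > 0$, so to locate a negative real root close to $0$ it suffices to find a small negative point at which the polynomial is nonpositive and then invoke the Intermediate Value Theorem. A natural candidate is to set $x = -c/n$ for a suitable constant $c > 0$ and examine the sign of $-c/n + (1 - c/n)^n$ as $n \to \infty$.

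The key step is the asymptotic estimate $(1 - c/n)^n \to e^{-c}$ as $n \to \infty$, so that $i(K_{1,n}, -c/n) = -c/n + (1-c/n)^n \to e^{-c}$, which is positive. That by itself only shows the polynomial is positive there, so I instead want a point where it is negative. Taking $x = -c$ with $c$ a fixed constant in $(0,1)$ does not shrink with $n$, so that is no good either; the right move is to push slightly past the regime where $(1+x)^n$ dominates. Concretely, fix any $c$ with $0 < c < 1$ and consider $x_n = -c^{1/n} \cdot$ (something)? Cleaner: note $i(K_{1,n},x) = 0$ forces $(1+x)^n = -x$, i.e. $1 + x = (-x)^{1/n}$ along the negative axis where $-x > 0$, giving the fixed-point equation $x = (-x)^{1/n} - 1$. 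For $x$ in, say, $(-1,0)$ the map $\phi_n(x) = (-x)^{1/n} - 1$ sends this interval into itself (since $(-x)^{1/n} \in (0,1)$), is continuous, hence has a fixed point $r_n$ by Brouwer/IVT applied to $\phi_n(x) - x$. Then $-r_n = (-r_n)^{1/n}$ wait — rather $1 + r_n = (-r_n)^{1/n}$, and since $(-r_n)^{1/n} \to 1$ for any fixed value in $(0,1)$ while the equation pins $-r_n = (1+r_n)^n$, one shows $r_n \to 0$: indeed $(-r_n)^{1/n} = 1 + r_n \ge 1 - |r_n|$, and if $|r_n|$ did not go to $0$ along some subsequence, bounded away from $0$ by $\delta$, then $(-r_n)^{1/n} \le (1)^{1/n}$ trivially but more usefully $1 + r_n = (-r_n)^{1/n} \to 1$ forces $r_n \to 0$, a contradiction. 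So $|r_n| < \varepsilon$ for all large $n$.

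Assembling this: I would (1) restrict to the negative real axis and rewrite the root condition as the fixed-point equation $x = (-x)^{1/n} - 1$ on the interval $(-1,0)$; (2) observe $\phi_n$ maps $(-1,0)$ continuously into itself and conclude a fixed point $r_n$ exists by the Intermediate Value Theorem applied to $x \mapsto \phi_n(x) - x$ on $[-1,0]$ (checking the sign at the endpoints); (3) deduce from $1 + r_n = (-r_n)^{1/n} \to 1$ that $r_n \to 0$, hence $|r_n| < \varepsilon$ for $n$ sufficiently large. The main obstacle — really the only subtle point — is the limiting argument in step (3): one must rule out $r_n$ staying bounded away from $0$, which I expect to handle cleanly by noting that if $|r_n| \ge \delta$ then $(-r_n)^{1/n} \ge \delta^{1/n} \to 1$ while simultaneously $(-r_n)^{1/n} = 1 + r_n \le 1 - \delta$, an eventual contradiction. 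Alternatively, and perhaps more transparently for the write-up, one can avoid the fixed-point machinery entirely: directly evaluate $i(K_{1,n}, -\varepsilon) = -\varepsilon + (1-\varepsilon)^n$, which tends to $-\varepsilon < 0$ as $n \to \infty$ since $0 < 1 - \varepsilon < 1$, while $i(K_{1,n},0) = 1 > 0$; the Intermediate Value Theorem then yields a root $r \in (-\varepsilon, 0)$, so $|r| < \varepsilon$, for all $n$ large enough that $(1-\varepsilon)^n < \varepsilon$. This second approach is shorter and is the one I would actually present.
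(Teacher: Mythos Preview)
Your second approach is correct and is the one to present: assuming without loss of generality that $0 < \varepsilon < 1$, you have $i(K_{1,n},0)=1>0$ and $i(K_{1,n},-\varepsilon) = -\varepsilon + (1-\varepsilon)^n \to -\varepsilon < 0$, so the Intermediate Value Theorem gives a root in $(-\varepsilon,0)$ once $(1-\varepsilon)^n < \varepsilon$. (Do state the reduction to $\varepsilon < 1$ explicitly; otherwise $1-\varepsilon$ may be nonpositive.)

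This is genuinely different from, and cleaner than, the paper's argument. The paper evaluates instead at $s = -1/\ln(n)$ and spends most of the proof establishing that $-\tfrac{1}{\ln n} + \bigl(1 - \tfrac{1}{\ln n}\bigr)^n < 0$ via the inequalities $\ln x \ge (x-1)/x$ and $\ln(1-x) \le -x$, together with a short calculus analysis of an auxiliary function $h(x) = x + x^2 e^{1/x} - 1$. Only then does it invoke the IVT and take $n > e^{1/\varepsilon}$ to force $1/\ln n < \varepsilon$. The payoff of the paper's route is an $\varepsilon$-free quantitative statement, namely that $i(K_{1,n},x)$ always has a real root in $(-1/\ln n,\,0)$; your approach trades that explicit rate for a much shorter proof of exactly the lemma as stated. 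Since the only downstream use of the lemma is Proposition~\ref{proptreescoronalargerealpart}, which merely needs real roots arbitrarily close to $0$, your version suffices.

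Your first approach (the fixed-point formulation) is salvageable but, as you suspected, not worth the detour; the limiting step as written is slightly circular (you assume $r_n$ bounded away from $0$, then use $(-r_n)^{1/n} \to 1$, which already presumes $-r_n$ stays in a compact subinterval of $(0,1]$). Stick with the direct IVT argument.
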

\begin{proof}
We know that $i(K_{1,n},x)=x+(1+x)^n$. Evaluating $i(K_{1,n},x)$ at $0$ we obtain $1$. Evaluating at $s=-\dfrac{1}{\ln(n)}$ we obtain 
\begin{align}
-\dfrac{1}{\ln(n)}+\left(1-\dfrac{1}{\ln(n)}\right)^n\label{negativevalue}
\end{align}

To show that (\ref{negativevalue}) is negative, we will require the following two elementary inequalities for $x\in (0,1)$: 
\begin{eqnarray*}
\ln(x) &\ge & \frac{x-1}{x} ~~~~~\mbox{and}\label{bound1}\\
\ln(1-x) &\le & -x\label{bound2} 
\end{eqnarray*}
Set $x=\frac{1}{\ln(n)}$; note that for $n\ge 3$, $x\in(0,1)$. 
If we can show that $x+x^2e^{\frac{1}{x}}-1>0$, then the following sequence of implications hold:

\begin{eqnarray*}
x+x^2e^{\frac{1}{x}}-1&>& 0\\
\frac{x-1}{x}&>&-xe^{\frac{1}{x}}\\ 
\ln(x)&>&\ln(1-x)e^{\frac{1}{x}}\ \ \ \ \ \text{by inequalities (\ref{bound1}) and (\ref{bound2})}\\
\ln\left(\frac{1}{\ln(n)}\right)&>&n\ln\left(1-\frac{1}{\ln(n)}\right)\\
-\dfrac{1}{\ln(n)}+\left(1-\dfrac{1}{\ln(n)}\right)^n & < & 0.
\end{eqnarray*}

We now show that $h(x) = x+x^2e^{\frac{1}{x}}-1>0$ is indeed true for $x\in(0,1)$. Now $h'(x)=1+e^{\frac{1}{x}}(2x-1)$ and $h''(x)=e^{\frac{1}{x}}\left(\frac{2x^2-2x+1}{x^2}\right)$. It is straightforward to see that for $x\neq0$, $h''(x)>0$ and therefore $h(x)$ is always concave up. The function $h'(x)$ is continuous on $[a,b]$ for all $0<a< b$ so, by Rolle's Theorem, $h(x)$ has at most one positive critical point. The Intermediate Value Theorem gives that any positive critical point of $h(x)$ must lie in the interval $(0.4,0.5)$ since $h'(0.4)\approx-1.436498792<0$ and $h'(.5)=1>0$. Now on $(0.4,0.5)$, $h(x)\ge 0.4+(0.4)^2e^{2}-1\approx .582248976>0$. Thus $h(x)$ is concave up for all $x\neq 0$, has only one positive critical point which is in the interval $(0.4,0.5)$, and is strictly positive on $(0.4,0.5)$. It follows that the absolute minimum of $h(x)$ on $(0,\infty)$ is strictly positive, and therefore $h(x) = x+x^2e^{\frac{1}{x}}-1>0$ for all $x\in(0,1)$. We conclude that $i(G,s)<0$.

Let $n> e^{1/\varepsilon}$. We may conclude, by the Intermediate Value Theorem, that $i(K_{1,n},x)$ has a real root $r$ in the interval $\left(-\frac{1}{\ln(n)},0\right)$. Then
\begin{eqnarray*}
r&>&-\dfrac{1}{\ln(n)}\\
&>&-\dfrac{1}{\ln(e^{1/\varepsilon})}\\
&=&\dfrac{-1}{\varepsilon^{-1}}\\
&=&-\varepsilon.
\end{eqnarray*}

Hence, $i(G,x)$ has a root in $(-\varepsilon,0)$ for all $\varepsilon>0$.

\end{proof}

We can now prove that trees do not necessarily have stable independence polynomials (and in fact can have independence roots with arbitrarily large real part).  

\begin{prop}\label{proptreescoronalargerealpart}
If $G$ is a graph with $\alpha(G)\ge 4$ and $R>0$, then for sufficiently large $n$, $K_{1,n}\circ G$ has independence roots in the RHP with real part at least $R$.
\end{prop}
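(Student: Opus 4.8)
The plan is to combine Gutman's corona formula (Theorem~\ref{propcoronaformula}) with the two lemmas just proved, Lemma~\ref{lemarblargeRHProots} and Lemma~\ref{lemmastarroots}. The starting point: since $K_{1,n}$ has $n+1$ vertices, Theorem~\ref{propcoronaformula} gives
\[ i(K_{1,n}\circ G,x)=i\left(K_{1,n},\tfrac{x}{i(G,x)}\right)i(G,x)^{n+1}, \]
an identity of rational functions that I would evaluate at complex numbers $x_0$ with $i(G,x_0)\neq 0$. Recalling $i(K_{1,n},y)=y+(1+y)^n$, it follows that whenever $r$ is a root of $i(K_{1,n},y)$ and $x_0$ satisfies $i(G,x_0)\neq 0$ and $\tfrac{x_0}{i(G,x_0)}=r$, we get $i(K_{1,n}\circ G,x_0)=i(K_{1,n},r)\,i(G,x_0)^{n+1}=0$, so $K_{1,n}\circ G$ inherits an independence root from every such $x_0$.

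Next I would rewrite the condition $\tfrac{x}{i(G,x)}=r$. Since $i(K_{1,n},0)=1\neq 0$, every root $r$ of $i(K_{1,n},y)$ is nonzero, and for such $r$ the condition is equivalent to $r\,i(G,x)-x=0$, i.e.\ to $i(G,x)+mx=0$ with $m:=-\tfrac1r$. The point that needs care is that a root produced this way should genuinely be a root of $i(K_{1,n}\circ G,x)$ and not get lost at a pole of the rational identity; but this is automatic, because any root $x_0$ of $i(G,x)+mx$ must have $i(G,x_0)\neq 0$ --- otherwise $mx_0=0$ forces $x_0=0$, contradicting $i(G,0)=1$. Hence \emph{every} root of $i(G,x)+mx$ is an independence root of $K_{1,n}\circ G$. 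I would also record that $i(G,x)$ has all positive coefficients (each $i_k\geq 1$ for $0\le k\le\alpha(G)$, since a maximum independent set of $G$ has subsets of every smaller size) and degree $\alpha(G)\ge 4$.

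Finally I would assemble the estimates. Fix $R>0$. Because $i(G,x)$ has positive coefficients and degree at least $4$, Lemma~\ref{lemarblargeRHProots}(1) yields a threshold $m_0>0$ such that for all $m\ge m_0$ the polynomial $i(G,x)+mx$ has a root with real part greater than $R$. Applying Lemma~\ref{lemmastarroots} with $\varepsilon=1/m_0$, there is an $N$ so that for every $n\ge N$ the polynomial $i(K_{1,n},y)$ has a real root $r$ with $|r|<\varepsilon$; since independence polynomials have positive coefficients (and $i(K_{1,n},0)=1$), this $r$ is negative, so $m:=-\tfrac1r=\tfrac1{|r|}>m_0$. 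For such $n$, $i(G,x)+mx$ has a root $x_0$ with $\text{Re}(x_0)>R>0$, and by the previous paragraph $x_0$ is an independence root of $K_{1,n}\circ G$ lying in the open right half-plane with real part exceeding $R$. I expect the only delicate step to be the substitution bookkeeping in the middle paragraph --- pinning down exactly which solutions of $\tfrac{x}{i(G,x)}=r$ survive as roots of the corona's polynomial --- which the observation $i(G,0)=1$ settles cleanly; everything else is a direct application of the cited results.
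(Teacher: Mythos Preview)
Your proof is correct and follows essentially the same approach as the paper's: both use Gutman's corona formula to reduce to finding roots of $i(G,x)-\tfrac{x}{r}=i(G,x)+mx$ with $m=-1/r$, then invoke Lemma~\ref{lemmastarroots} to make $m$ large and Lemma~\ref{lemarblargeRHProots}(1) to push a root past $R$. If anything, your version is more careful than the paper's in checking that solutions of $i(G,x)+mx=0$ do not collide with the poles of the rational identity (your $i(G,0)=1$ observation).
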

\begin{proof}
Set  $H=K_{1,n}\circ G$. By Theorem \ref{propcoronaformula}, $$i(H,x)=\left(i(G,x)\right)^{n+1}i\left(K_{1,n},\frac{x}{i(G,x)}\right)$$ so the independence roots of $H$ are the roots of $i(G,x)$ together with the roots of the polynomials $f(x)=-\frac{x}{r}+i(G,x)$ for all independence roots $r$ of $i(K_{1,n},x)$. By Lemma \ref{lemmastarroots}, there exist real roots $r$ that are negative and arbitrarily close to $0$ for sufficiently large $n$. In this case, $-\frac{x}{r}=px$ for some $p>0$ and since $\alpha(G)\ge 4$, we can apply Lemma \ref{lemarblargeRHProots} to show that for any $R> 0$ and sufficiently large $n$, $f(x)$ has a root with real part greater than $R$, and so the same holds for $i(H,x)$.
\end{proof}

This proposition implies that trees are not necessarily stable as $K_{1,n}\circ\overline{K_m}$ is a tree for all $m\ge 1$, and will have independence roots in the RHP for all $m\ge 4$ and sufficiently large $n$. Thus the independence roots of trees can be found with arbitrarily large real parts.

\section{Concluding Remarks}

We end this paper with a few open problems. First, we have seen that stars are stable, while other complete multipartite graphs are not. Calculations suggest that complete bipartite graphs are stable, so we ask:

\begin{problem}
Are all complete bipartite graphs stable?
\end{problem}

While we have provided a number of families (and constructions) of nonstable graphs, we still feel that stableness is a more common property, as small graphs suggest. For fixed $p \in (0,1)$, a \textit{random graph}, $G_{n,p}$, is a graph constructed on $n$ vertices where each pair of vertices is joined by an edge independently with probability $p$. Almost all graphs are said to have a certain property if as $n$ tends to $\infty$, the probability that $G_{n,p}$ has that property tends to $1$.

\begin{problem}
Are almost all graphs stable?
\end{problem}

While we have shown that every graph with independence number at most $3$ is stable, and there are nonstable graphs of all higher independence numbers, we ask:

\begin{problem}
Characterize when a graph of independence number $4$ is stable.
\end{problem}

\begin{problem}
Characterize when a tree is stable.
\end{problem}

Finally, we do not know the smallest graph with respect to vertex set size that is not stable. Calculations show that the cardinality is greater than $10$ and our arguments show that it is at most $25$, but it would be interesting to locate the extremal graph.


\end{document}